\newtheorem{theorem}{Theorem}[section]
\newtheorem{lemma}[theorem]{Lemma}
\newtheorem{corollary}[theorem]{Corollary}
\newtheorem{proposition}[theorem]{Proposition}
 \theoremstyle{definition}
 \newtheorem{definition}[theorem]{Definition}
 \newtheorem{remark}[theorem]{Remark}
\numberwithin{equation}{section}
\newcommand {\N}{\mathbb{N}} 
\newcommand {\Z}{\mathbb{Z}} 
\newcommand {\R}{\mathbb{R}} 
\newcommand{\WW}{\mathcal{W}}
\DeclareMathOperator{\Per}{Per}
\DeclareMathOperator{\NW}{NW}
\DeclareMathOperator{\End}{End}
\DeclareMathOperator{\Id}{Id}
\begin{document}
\title[A Garden of Eden theorem]{A Garden of Eden theorem for Smale spaces}
\author{Tullio Ceccherini-Silberstein}
\address{Dipartimento di Ingegneria, Universit\`a del Sannio, I-82100 Benevento, Italy}
\address{Istituto Nazionale di Alta Matematica ``Francesco Severi'', I-00185 Rome, Italy}
\email{tullio.cs@sbai.uniroma1.it}
\author{Michel Coornaert}
\address{Universit\'e de Strasbourg, CNRS, IRMA UMR 7501, F-67000 Strasbourg, France}
\email{michel.coornaert@math.unistra.fr}
\subjclass[2020]{37D20, 58F15, 54H20, 37B40, 37J45, 37B10, 37C50, 37B15, 37D20, 37C29}
\keywords{Smale space, Ruelle-Smale dynamical system, Myhill property, Moore property, Garden of Eden teorem, subshift of finite type, expansivity, shadowing, homoclinicity}
\begin{abstract}
Given  a dynamical system $(X,f)$ consisting of a  compact metrizable space $X$ and a homeomorphism $f \colon X \to X$,
an endomorphism of $(X,f)$ is a continuous map of $X$ into itself which commutes with $f$.
One says that a dynamical system $(X,f)$ is surjunctive if every injective endomorphism of $(X,f)$ is surjective.
An endomorphism of $(X,f)$ is called pre-injective if its restriction to each $f$-homoclinicity class of $X$ is  injective.
One says that a dynamical system  has the Moore property if every surjective endomorphism of the system is pre-injective
and that it has the Myhill property if every pre-injective endomorphism is surjective.
One says that a dynamical system satisfies the Garden of Eden theorem if it has both the Moore and the Myhill properties.
We prove that every irreducible Smale space satisfies the Garden of Eden theorem
and that every non-wandering Smale space is surjunctive and has the Moore property.
\end{abstract}
\date{\today}
\maketitle

\tableofcontents

\section{Introduction}

Smale spaces were introduced by Ruelle at the end of the  1970s in his influential monograph on thermodynamic formalism~\cite{ruelle-thermodynamic-2nd}.
They form a rich class of uniformly hyperbolic dynamical systems which includes in particular
 all Anosov diffeomorphisms~\cite{anosov-geodesic},  
 all non-wandering sets of Smale's Axiom A diffeomorphisms~\cite{smale-bams},
 all expansive principal algebraic dynamical systems~\cite{schmidt-book}, \cite{li-goe-spec-2019}, \cite{meyerovitch-2019}, 
 all subshifts of finite type~\cite{lind-marcus-second}, 
 as well as various types of solenoids and attractors~\cite{williams_1974}, \cite{wieler_2014}.
Here, by a \emph{dynamical system}, we mean a pair $(X,f)$ consisting of a compact metrizable space $X$, called the \emph{phase space} of the system,  equipped with a homeomorphism $f \colon X \to X$.
Roughly speaking, a dynamical system $(X,f)$ is a \emph{Smale space} if $X$ admits a local product structure such that  $f$ is uniformly contracting in one direction and uniformly expanding in the other one
(see Section~\ref{sec:background} for precise definitions). 
An \emph{endomorphism} of a dynamical system $(X,f)$ is a continuous map $\tau \colon X \to X$ which commutes with $f$, i.e., such that $f \circ \tau = \tau \circ f$.
One says that a dynamical system $(X,f)$ is \emph{surjunctive} if every injective endomorphism of $(X,f)$ is surjective.
As a trivial example, any dynamical system with finite phase space is surjunctive since every injective map of a finite set into itself is surjective. 
On the other hand,   the dynamical system $(X,f)$, where $X$ is the real interval $[0,1]$ and $f \colon X \to X$ is the identity map, is not surjunctive since the map $\tau \colon X \to X$, defined by $\tau(x) \coloneqq x/2$ for all $x \in X$, is an injective endomorphism of $(X,f)$ which is not surjective.
The word ``surjunctive" was created by Gottschalk~\cite{gottschalk}. 
\par
Given a dynamical system $(X,f)$ and a compatible metric on $X$, one says that two points $x,y \in X$ are \emph{homoclinic} if the distance from $f^n(x)$ to $f^n(y)$ converges to zero as $|n|$ tends to infinity.
This definition does not depend on the choice of the compatible metric by compactness of $X$.
Homoclinicity is an equivalence relation on the phase space.
One says that an endomorphism of a dynamical system  is \emph{pre-injective} if its restriction to every homoclinicity class of the phase space is injective.
One says that a dynamical system $(X,f)$ has the \emph{Moore property} if every surjective endomorphism of $(X,f)$ is pre-injective
and that $(X,f)$ has the \emph{Myhill property} if every pre-injective endomorphism of $(X,f)$ is surjective.
Note that injectivity implies pre-injectivity.
Therefore, every  dynamical system which has the Myhill property is surjunctive.
One says that a dynamical system has the Moore-Myhill property, or that it satisfies the \emph{Garden of Eden theorem}, if it satisfies
both the Moore and the Myhill properties.
 The Garden of Eden theorem was first established for full shifts by Moore~\cite{moore} and Myhill~\cite{myhill}.
It was subsequently extended  to all irreducible subshifts of finite type
by Fiorenzi in~\cite{fiorenzi-sofic}.
She also showed that irreducible sofic subshifts have the Myhill property but may fail to have the Moore property.
In~\cite[Theorem~1.1]{csc-anosov-2016}, it is shown that Anosov diffeomorphisms on tori  satisfy the Garden of Eden theorem.
\par
  In the last decades, several results on surjunctivity and Garden of Eden type theorems have been established  in many other categories of dynamical systems
(continuous actions of groups other than $\Z$, additional structure on the phase space, etc.). 
See the surveys \cite{csc-cat}, \cite{csc-goe-old-and-new-2020}, the monographs 
\cite{csc-cag2}, \cite{csc-ecag}, and the references therein.
\par
In the present paper, we shall prove the following.

\begin{theorem}
\label{t:goe-for-irred-smale}
Every irreducible Smale space satisfies the Garden of Eden theorem.
\end{theorem}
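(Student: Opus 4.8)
The plan is to route both the Moore and the Myhill property through a single entropy criterion, exactly as in the classical Moore--Myhill argument for full shifts and its extension to irreducible subshifts of finite type by Fiorenzi. The central claim I would isolate is the equivalence
\begin{equation}
\tau \text{ is pre-injective} \iff \htop(\tau(X), f) = \htop(X, f),
\end{equation}
valid for every endomorphism $\tau$ of an irreducible Smale space $(X,f)$. Granting this, irreducibility closes the argument: an irreducible Smale space carries a \emph{unique} measure of maximal entropy, and this measure has full support, so any proper closed $f$-invariant subset has strictly smaller topological entropy (using that expansivity makes the entropy map upper semicontinuous, a maximizing measure on the subset would otherwise be a second full-entropy measure not of full support). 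Since $\tau(X)$ is always a closed $f$-invariant subset of $X$, the equality $\htop(\tau(X),f) = \htop(X,f)$ forces $\tau(X) = X$. The Myhill property then reads ``pre-injective $\Rightarrow$ full-entropy image $\Rightarrow$ surjective'', while the Moore property reads ``surjective $\Rightarrow \tau(X)=X \Rightarrow$ full-entropy image $\Rightarrow$ pre-injective''.

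The equivalence itself splits into two implications. First I would record a Curtis--Hedlund--Lyndon type lemma: because a Smale space is expansive and has the pseudo-orbit shadowing property, every endomorphism $\tau$ is \emph{local}, in the sense that there is an integer $N$ for which the behaviour of $\tau(x)$ on any time window $[-M,M]$, read at the expansivity scale, depends only on the behaviour of $x$ on $[-M-N,M+N]$. The implication $\lnot(\text{pre-injective}) \Rightarrow$ strict entropy drop is technically substantial. Starting from distinct homoclinic points $x \neq y$ with $\tau(x)=\tau(y)$, homoclinicity concentrates the difference between $x$ and $y$ in a bounded window, and locality of $\tau$ together with $\tau(x)=\tau(y)$ shows that swapping the $x$-pattern for the $y$-pattern on such a window leaves $\tau$ unchanged. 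Using the specification property supplied by irreducibility, I would splice $k$ disjoint copies of this window into a single long orbit, obtaining $2^{k}$ pairwise distinct, $(n,\varepsilon)$-separated points all sharing the same image; counting these against $(n,\varepsilon)$-spanning sets of $\tau(X)$ yields $\htop(\tau(X),f) < \htop(X,f)$.

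For the converse implication, pre-injective $\Rightarrow$ no entropy loss, I would argue by contraposition: assuming $\htop(\tau(X),f) < \htop(X,f)$, a counting estimate produces, for arbitrarily large $n$, a pair of $(n,\varepsilon)$-separated points of $X$ whose $\tau$-images lie in a common $(n,\varepsilon')$-ball. Expansivity and the local product (bracket) structure of the Smale space then allow me to modify this pair outside a bounded window so that the two points become genuinely homoclinic while their images remain equal, contradicting pre-injectivity. The delicate point here, which I anticipate as the main obstacle, is exactly this ``homoclinicization'' surgery: one must simultaneously force the two competing points to agree at large $|n|$ and preserve the coincidence of their $\tau$-images, two requirements in tension, and controlling how the boundary corrections propagate through $\tau$ requires a careful coupling of the shadowing lemma with the locality lemma above. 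Once both implications are established, the theorem follows by assembling the Moore and Myhill directions as in the first paragraph.
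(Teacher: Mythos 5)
There is one genuine gap, and it sits at the core of your argument: your appeal to ``the specification property supplied by irreducibility''. Irreducibility does \emph{not} supply specification. Weak specification already forces topological mixing (interleave a point of $U$ at time $0$ with a suitable preimage of a point of $V$ at an arbitrary time $n \ge N$), whereas an irreducible Smale space need not be mixing: by the Smale--Bowen--Ruelle decomposition (Theorem~\ref{t:smale-decomposition}) it is a cyclic rotation of $k$ clopen pieces $W_0,\dots,W_{k-1}$ with $f(W_i)=W_{i+1}$, and as soon as $k\ge 2$ orbit segments can only be concatenated with gaps that are compatible modulo $k$; the two-point swap is already a counterexample. Both implications of your central equivalence (pre-injectivity $\Leftrightarrow$ conservation of topological entropy of the image) are proved by splicing windows separated by arbitrarily placed gaps, so both proofs break down exactly in the non-mixing case. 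As written, your argument could only be run on topologically mixing Smale spaces, where it amounts to re-proving the Li--Doucha theorem (Theorem~\ref{t:li-doucha}); the paper instead invokes that theorem as a black box, after transferring weak specification to a mixing Smale space from a mixing SFT cover obtained by Markov partitions (Theorem~\ref{t:coding-smale}, Proposition~\ref{p:tmftwsp}, Lemma~\ref{l:goe-top-mixing-smale}).

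The surrounding frame of your proposal is sound: irreducible Smale spaces are intrinsically ergodic with a measure of maximal entropy of full support (cf.\ Remark~\ref{r:surj-equiv-preserve-measure}), so $\htop(\tau(X),f)=\htop(X,f)$ does force $\tau(X)=X$, and your assembly of the Moore and Myhill properties from the equivalence is valid. What is missing is precisely the reduction the paper performs before any entropy or specification argument is run: an endomorphism $\tau$ shifts the pieces, $\tau(W_i)\subset W_{i+c}$ for a fixed $c$ (Lemma~\ref{l:tau-W-in-WW}), so that $\tau_i \coloneqq f^{n_i}\circ\tau\vert_{W_i}$ is an endomorphism of the \emph{mixing} system $(W_i,f^k)$; surjectivity and pre-injectivity are then transferred between $\tau$ and the $\tau_i$ using the facts that homoclinic points of $(X,f)$ always lie in a common piece (Lemma~\ref{l:stably-equiv-same-compo}) and that $f$-homoclinicity coincides with $f^k$-homoclinicity (Proposition~\ref{p:homoclinic-f-f-k}). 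With this reduction in place, your entropy machinery (or simply a citation of Li--Doucha) applies on each piece and the theorem follows; without it, the specification-based splicing on which your equivalence rests has no footing.
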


The result of Fiorenzi mentioned above, namely  that irreducible subshifts of finite type satisfy the Garden of Eden theorem, is covered
by Theorem~\ref{t:goe-for-irred-smale} since all subshifts of finite type are Smale spaces.
Theorem~\ref{t:goe-for-irred-smale} also shows that every irreducible Anosov diffeomorphism satisfies the Garden of Eden theorem.
As all Anosov diffeomorphisms on tori are known to be irreducible, Theorem~\ref{t:goe-for-irred-smale} also covers
Theorem~1.1 in~\cite{csc-anosov-2016}.
  However, it is an open question whether every Anosov diffeomorphism  is irreducible. 
For an Anosov diffeomorphism $f \colon M \to M$,    the following conditions are known to be equivalent 
(see e.g.~\cite[Theorem 5.10.3]{brin-stuck}):
(a) $(M,f)$ is irreducible;
(b) $(M,f)$ is topologically mixing;
(c) $(M,f)$ is non-wandering; 
(d) the periodic points of $f$ are dense in $M$.
All known examples of compact manifolds admitting Anosov diffeomorphisms are homeomorphic to \emph{infra-nilmanifolds}, i.e.,
compact quotients of simply-connected nilpotent Lie groups by discrete groups of isometries.
Manning~\cite{manning-1974} proved that all Anosov diffeomorphisms of infra-nilmanifolds are irreducible.
Thus, Theorem~\ref{t:goe-for-irred-smale} gives the following.

\begin{corollary}
\label{c:anosov-diff-infranil}
Every Anosov diffeomorphism of an infra-nilmanifold satisfies the Garden of Eden theorem.
\end{corollary}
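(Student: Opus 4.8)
The plan is to obtain this statement as an immediate specialization of Theorem~\ref{t:goe-for-irred-smale}, with no new dynamical analysis required: the entire argument consists of checking that an Anosov diffeomorphism of an infra-nilmanifold satisfies the two hypotheses of that theorem, namely that it defines a Smale space and that this Smale space is irreducible.

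First I would recall that every Anosov diffeomorphism is a Smale space. For an Anosov diffeomorphism $f \colon M \to M$ on a compact manifold, the hyperbolic splitting of the tangent bundle into stable and unstable subbundles integrates to local stable and unstable manifolds, which furnish precisely the local product structure demanded in the definition of a Smale space in Section~\ref{sec:background}; moreover $f$ contracts uniformly along the stable direction and expands uniformly along the unstable one. Thus $(M,f)$ is a Smale space, as already indicated in the Introduction.

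Next I would invoke Manning's theorem~\cite{manning-1974}, which guarantees that any Anosov diffeomorphism of an infra-nilmanifold is irreducible. This supplies the one remaining hypothesis of Theorem~\ref{t:goe-for-irred-smale}. Having confirmed that $(M,f)$ is both a Smale space and irreducible, I would apply Theorem~\ref{t:goe-for-irred-smale} directly to conclude that $(M,f)$ satisfies the Garden of Eden theorem, as claimed.

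Since the corollary is a pure specialization, there is no genuine obstacle within its own proof; all the substantive work resides in Theorem~\ref{t:goe-for-irred-smale}. The only point deserving attention is that the notion of irreducibility produced by Manning agrees with the one assumed in Theorem~\ref{t:goe-for-irred-smale}. For Anosov diffeomorphisms this is ensured by the equivalences recorded in the Introduction---irreducibility, topological mixing, non-wandering behaviour, and density of periodic points all coincide---so the hypotheses match without any reconciliation of definitions.
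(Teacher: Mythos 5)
Your proposal is correct and follows exactly the paper's own route: the corollary is deduced by combining Theorem~\ref{t:goe-for-irred-smale} with the fact that Anosov diffeomorphisms are Smale spaces and with Manning's theorem~\cite{manning-1974} that Anosov diffeomorphisms of infra-nilmanifolds are irreducible. Your closing remark on the agreement of the notions of irreducibility (via the equivalences listed in the Introduction) is a sensible extra check but introduces nothing beyond what the paper already records.
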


There exist Smale spaces that are not surjunctive (see Remark~\ref{r:non-surjunctive-sft})
as well as Smale spaces that do not have the Myhill property nor the Moore property (see Remark~\ref{r:smale-space-not-moore}).
On the other hand, there exist non-wandering Smale spaces that do not have the Myhill property (Remark~\ref{r:smale-space-not-myhill}).
However, we shall establish the following.

\begin{theorem}
\label{t:main-surjunctive}
Every non-wandering Smale space is surjunctive and has the Moore property.
\end{theorem}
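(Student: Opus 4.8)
The plan is to reduce both assertions to the irreducible case (Theorem~\ref{t:goe-for-irred-smale}) by means of the spectral decomposition. First I would record the structural input: a non-wandering Smale space $(X,f)$ admits a spectral decomposition $X = X_1 \sqcup \cdots \sqcup X_k$ into finitely many pairwise disjoint, closed, $f$-invariant basic pieces on each of which $f$ is irreducible; since these finitely many closed sets partition the compact space $X$, they are in fact clopen, and each $(X_i, f|_{X_i})$ is itself an irreducible Smale space. Two observations make the pieces tractable. Because distinct clopen pieces are separated by a uniform distance $\delta > 0$ and are $f$-invariant, no two points lying in different pieces can be homoclinic; hence every homoclinicity class of $X$ is contained in a single $X_i$. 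Moreover, any endomorphism $\tau$ preserves homoclinicity, since $d(f^n x, f^n y) \to 0$ implies $d(f^n \tau x, f^n \tau y) = d(\tau f^n x, \tau f^n y) \to 0$ by uniform continuity of $\tau$. Finally, $\tau$ carries pieces to pieces: choosing $x_i \in X_i$ with dense $f$-orbit, the orbit of $\tau(x_i)$ is dense in $\tau(X_i)$, a transitive closed $f$-invariant set lying in one clopen invariant piece, so $\tau(X_i) \subseteq X_{\sigma(i)}$ for a well-defined index map $\sigma \colon \{1, \dots, k\} \to \{1, \dots, k\}$.

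For surjunctivity, suppose $\tau$ is injective. Then $\tau \colon X \to \tau(X)$ is a topological conjugacy onto its image, so $\tau|_{X_i}$ preserves topological entropy: writing $h_i \coloneqq \htop(f|_{X_i})$ one gets $h_i \le h_{\sigma(i)}$, and the equality $h_i = h_{\sigma(i)}$ forces $\tau(X_i) = X_{\sigma(i)}$ by the strict monotonicity of entropy for irreducible Smale spaces (a proper closed invariant subset has strictly smaller entropy). I would then argue by induction on $k$, peeling off the top entropy level $T \coloneqq \{ i : h_i = H\}$ with $H \coloneqq \max_i h_i$. For $i \in T$ one has $h_{\sigma(i)} = H$, so $\sigma(T) \subseteq T$ and $\tau(X_i) = X_{\sigma(i)}$; injectivity of $\tau$ together with disjointness of the pieces forces $\sigma|_T$ to be injective, hence a permutation of $T$, and the same disjointness argument shows that no piece outside $T$ can map into $T$. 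Thus $\sigma(T^c) \subseteq T^c$, and $\tau$ restricts to an injective endomorphism of the clopen non-wandering Smale space $Y \coloneqq \bigsqcup_{i \in T^c} X_i$, which has strictly fewer pieces; the induction hypothesis makes $\tau|_Y$ surjective, while $\tau$ already maps $\bigsqcup_{i\in T} X_i$ onto itself, so $\tau(X) = X$. The base case $k=1$ is exactly the strict-monotonicity statement.

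For the Moore property, suppose $\tau$ is surjective. Then $\bigcup_i \tau(X_i) = X$ forces $\sigma$ to be onto, hence a bijection, and a short covering argument using disjointness gives $\tau(X_i) = X_{\sigma(i)}$ for every $i$; in particular each $\tau|_{X_i}$ is surjective. For a cycle of $\sigma$ of length $p$ through $i$, the iterate $\tau^p$ restricts to a surjective endomorphism of the irreducible Smale space $X_i$, which is therefore pre-injective by Theorem~\ref{t:goe-for-irred-smale}. Since every homoclinicity class sits inside some $X_i$ and $\tau$ preserves homoclinicity, injectivity of $\tau^p$ on that class yields injectivity of $\tau$ on it: if $\tau a = \tau b$ with $a,b$ in the class, then $\tau^p a = \tau^p b$, whence $a = b$. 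Hence $\tau$ is pre-injective, which is the Moore property.

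I expect the main obstacle to be the strict monotonicity of topological entropy for irreducible Smale spaces, which is precisely what prevents two distinct pieces from being injected into one and thereby drives the whole surjunctivity induction; it is most naturally obtained by pulling back a proper closed invariant subset through an irreducible finite-to-one Markov symbolic cover, where the statement reduces to the Perron--Frobenius fact that forbidding a word strictly lowers the growth rate of an irreducible subshift of finite type. The remaining ingredients, namely the clopenness of the basic pieces and the combinatorial bookkeeping about the index map $\sigma$, are comparatively routine.
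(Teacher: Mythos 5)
Your proposal is correct, but only half of it follows the paper's route. For the Moore property your argument is essentially the paper's own proof (Theorem~\ref{t:moore-for-nw-smale}): decompose $X$ into the irreducible clopen invariant pieces $\Omega_W$ of Theorem~\ref{t:smale-decomposition}, note that a surjective $\tau$ induces a permutation of the pieces with $\tau(X_i)=X_{\sigma(i)}$, apply the Moore property for irreducible Smale spaces (Theorem~\ref{t:goe-for-irred-smale}) to a power of $\tau$ fixing each piece, use the fact that homoclinic points lie in a common piece (Lemma~\ref{l:stably-equiv-same-compo}), and conclude via pre-injectivity of a power implying pre-injectivity (Corollary~\ref{c:pre-injective-powers}); your use of cycle lengths of $\sigma$ rather than the order of the permutation is immaterial. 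For surjunctivity, however, the paper does something entirely different and much softer: it does not touch the spectral decomposition, but combines Anosov's closing lemma (Theorem~\ref{t:nw-closure-per}), which makes $\Per(X,f)$ dense in the non-wandering space $X$, with expansivity, which makes each $\Per_n(X,f)$ finite (Lemma~\ref{l:exp-finitely-p-per}); an injective endomorphism then permutes each finite set $\Per_n(X,f)$, so its closed image contains the dense set $\Per(X,f)$ and equals $X$ (Lemma~\ref{l:dense-per-is-surj}). Your entropy-rigidity induction is a valid alternative and even yields extra information (an injective endomorphism must carry top-entropy pieces onto top-entropy pieces), but it is heavier than what the paper's stated toolkit supports: Theorem~\ref{t:coding-smale} only says a mixing Smale space is a \emph{factor} of a mixing subshift of finite type, whereas your strict-monotonicity lemma needs the Bowen coding to be bounded-to-one, hence entropy-preserving, so that a proper closed invariant subset pulls back to a proper subshift of strictly smaller entropy that still factors onto it; that stronger statement is in Bowen's work cited by the paper, and the passage from the mixing to the irreducible case via the cyclic decomposition is routine, so this is a matter of imported machinery rather than a logical gap. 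If you write your version up, make explicit that the clopen invariant set $Y=\bigsqcup_{i\in T^c}X_i$ in your induction is again a non-wandering Smale space (clear, since expansivity and the pseudo-orbit tracing property pass to clopen invariant subsets, or directly from (SBR6) of Theorem~\ref{t:smale-decomposition}), and note that the paper's Lemma~\ref{l:dense-per-is-surj} is strictly more general, applying to any expansive system with dense periodic points with no decomposition or entropy theory at all.
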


As every subshift of finite type is a Smale space, Theorem~\ref{t:main-surjunctive} gives us the following.

\begin{corollary}
\label{c:nw-sft-is surjunc}
Let $A$ be a finite set. 
Then every non-wandering subshift of finite type  $\Sigma \subset A^{\Z}$ 
is surjunctive and has the Moore property.
\end{corollary}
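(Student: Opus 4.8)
The plan is to obtain the corollary as an immediate specialization of Theorem~\ref{t:main-surjunctive}, the only work being to verify that a non-wandering subshift of finite type is literally an instance of a non-wandering Smale space. First I would record that the pair $(\Sigma,\sigma)$, with $\sigma \colon \Sigma \to \Sigma$ the two-sided shift, is a dynamical system in the sense of this paper: being a closed shift-invariant subset of the compact metrizable space $A^{\Z}$, the set $\Sigma$ is itself compact and metrizable, and $\sigma$ restricts to a homeomorphism of $\Sigma$.

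Next I would invoke the fact, recalled in the introduction and classical in symbolic dynamics (see~\cite{lind-marcus-second}), that every subshift of finite type is a Smale space. The relevant local product structure is explicit: if two points of $\Sigma$ agree on a sufficiently long central block, one may glue the right-hand (forward) tail of one to the left-hand (backward) tail of the other, and the finite-type condition ensures the spliced bi-infinite word again belongs to $\Sigma$. The shift $\sigma$ contracts differences in the forward coordinates and expands differences in the backward coordinates, so that the forward and backward directions play the roles of the stable and unstable directions of the Smale structure.

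Finally, I would note that the hypothesis that $\Sigma$ is non-wandering is precisely the assertion that the dynamical system $(\Sigma,\sigma)$ is non-wandering, so that $(\Sigma,\sigma)$ is a non-wandering Smale space; Theorem~\ref{t:main-surjunctive} then applies directly and yields that $(\Sigma,\sigma)$ is surjunctive and has the Moore property, which is the claim. I do not expect any genuine obstacle: the entire content of the argument is the translation between the combinatorial vocabulary of subshifts of finite type and the dynamical vocabulary of Smale spaces, and once this dictionary is in place the conclusion is simply a reading-off of Theorem~\ref{t:main-surjunctive}.
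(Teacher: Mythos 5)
Your proposal is correct and takes essentially the same route as the paper: the corollary is obtained by specializing Theorem~\ref{t:main-surjunctive}, the only content being the classical fact that every subshift of finite type is a Smale space (and that $\Sigma$, being closed and shift-invariant in $A^{\Z}$, is a compact metrizable dynamical system). The one cosmetic difference is that you justify the Smale-space structure via Ruelle's local product (splicing) axioms, whereas the paper works with the equivalent definition ``expansive $+$ pseudo-orbit tracing property'' and cites Walters' theorem that a subshift has the pseudo-orbit tracing property if and only if it is of finite type; both justifications are standard and equally valid.
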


The paper is organized as follows.
In Section~\ref{sec:background}, we introduce notation and collect background material.
The proof of the first part of Theorem~\ref{t:main-surjunctive} is given in Section~\ref{sec:surjunctivity}.
The proof of Theorem~\ref{t:goe-for-irred-smale} is given in Section~\ref{sec:goe-irred-smale}.
The proof of the second part of Theorem~\ref{t:main-surjunctive} is given in Section~\ref{sec:nw-surj-moore}.

\section{Preliminaries}
\label{sec:background}
In this section we fix notation  and collect some background material. 

\subsection{General notation}
We write $\Z$ for the set of integers and $\N$ for the set of non-negative integers.
\par
Given a set $X$, we write $|X|$ for the cardinality of $X$ and we denote by $\Id_X$ the identity map on $X$.
We use the symbol $\circ$ for the composition of maps (with the convention $(g \circ f)(x) \coloneqq g(f(x))$
for $f \colon X \to Y$, $g \colon Y \to Z$, and $x \in X$).
Given a map $f \colon X \to X$, we set $f^0 \coloneqq \Id_X$ and define inductively $f^n$ for $n \in \N$ by setting $f^{n + 1} \coloneqq f \circ f^n$. 
In the case when $f$ is bijective, we write $f^{-1}$ for the inverse of $f$ and 
$f^{-n} \coloneqq (f^{-1})^n$ for every $n \in \N$.
\par
If $Y$ is a subset of a topological space $X$, we write $\overline{Y}$ for the closure of $Y$ in $X$.
\par
If $X$ is a metrizable space, we denote by $d_X$, or simply $d$ if there is no risk of confusion, any compatible metric on $X$.

\subsection{Dynamical systems}
(See~\cite{brin-stuck}, \cite{ruelle-thermodynamic-2nd}, \cite{nekrashevych})
\begin{definition}[Dynamical system]
A \emph{dynamical system} is a pair $(X,f)$, where
$X$ is a compact metrizable space  and  
 $f \colon X \to X$ is  a homeomorphism.
\end{definition}
 
 Let $(X,f)$ be a dynamical system.
 A subset $Y \subset X$ is said to be \emph{invariant}   if $f(Y) = Y$.
If $Y \subset X$ is an invariant subset, we denote by $f\vert_Y$ the restriction of $f$ to 
$Y$, i.e., the homeomorphism
$f\vert_Y \colon Y \to Y$ given by $f\vert_Y(y)  := f(y)$ for all $y \in Y$.
When $Y \subset X$ is a closed invariant subset, then the pair $(Y,f|_Y)$ is also a dynamical system
and one says that $(Y,f|_Y)$ is a \emph{subsystem} of $(X,f)$.
We shall write it  $(Y,f)$, or even simply $Y$,  if there is no risk of confusion.
\par
A point $x \in X$ is said to be \emph{periodic} if there exists an integer $n \geq 1$ such that $f^n(x) = x$.
One then says that the integer $n$ is a \emph{period} of $x$.
We denote by $\Per_n(X,f)$ the subset of $X$ consisting  of all periodic points of $(X,f)$ admitting $n$ as a period.
Note that $\Per_n(X,f)$ is a closed invariant subset of $X$.
The set $\Per(X,f) \coloneqq \bigcup_{n \geq 1} \Per_n(X,f)$  of all periodic points of $f$ is also an invariant subset of $X$.
\par
A point $x \in X$ is said to be \emph{non-wandering} if for every neighborhood $U$ of $x$, there exists an integer $n \geq 1$ such that $U \cap f^n(U) \not= \varnothing$.
The set $\NW(X,f)$ consisting of all non-wandering points of $f$ is a closed invariant subset of $X$.
Observe that $\Per(X,f) \subset \NW(X,f)$.
One says that the dynamical system $(X,f)$ is \emph{non-wandering} if $\NW(X,f) = X$.
\par
One says that $(X,f)$ is \emph{topologically mixing} if for all non-empty subsets $U$ and $V$ of $X$, there exists an integer  $N \geq 1$ such that $U \cap f^n(V) \not= \varnothing$ for all  $n \geq N$.
One says that $(X,f)$ is \emph{irreducible}, or \emph{topologically $+$-transitive},  if for all non-empty open subsets $U$ and $V$ of $X$, there exists $n \geq 1$ such that $U \cap f^n(V) \not= \varnothing$.
The dynamical system $(X,f)$ is irreducible if and only if there exists a point $x \in X$ whose forward orbit
$\{f^n(x) : n \geq 1\}$ is dense in $X$.
We have the following   implications:
  \[
  \text{topologically mixing } \implies
  \text{irreducible } \implies
  \text{non-wandering}.
  \]
Each of these implications is strict.
Indeed, consider a discrete space $X$ consisting of two distinct points and let $f \colon X \to X$ be the homeomorphism of $X$ which exchanges the two  points of $X$.
Then the dynamical system $(X,f)$ is irreducible but not topologically mixing.
On the other hand, the dynamical system $(X,\Id_X)$ is non-wandering but not irreducible.
\par
Let $X$ be a compact metrizable space
equipped with a homeomorphism $f \colon X \to X$. 
One says that the dynamical system $(X,f)$ has the \emph{weak specification} property 
if
for every $\varepsilon > 0$ 
there exists an integer $N \geq 1$ such that
for any  sequence $I_1,I_2,\dots,I_k$ of finite intervals of $\Z$
satisfying  $\min(I_{i+1}) -  \max(I_i) \geq N$ for each $i \in \{1,2,\dots,k-1\}$
and any sequence $x_1,x_2,\dots,x_k$ of points of $X$,
there exists a point $x \in X$ such that
$d(f^n(x),f^n(x_i)) \leq \varepsilon$ for all $i \in \{1,2,\dots,k\}$ and $n \in I_i$.
By compactness of $X$, this definition does not depend on the choice of the compatible metric $d$. 
\par
   Let $(X,f)$ and $(Y,g)$ be two dynamical systems.
 A continuous map $\varphi \colon X \to Y$ is called a \emph{morphism of dynamical systems}  if it satisfies
$\varphi \circ f = g \circ \varphi$.
Note that $\varphi(X)$ is then a closed invariant subset of $(Y,g)$.
The class of all dynamical systems together with the morphisms between them form a category.

\begin{proposition}
\label{p:prop-morphism}
Let  $\varphi \colon (X,f) \to (Y,g)$ be a morphism of dynamical systems.
Then the following hold:
\begin{enumerate}[\rm (i)]
\item
$\varphi(\Per_n(X,f)) \subset \Per_n(Y,g)$ for every   $n \geq 1$;
\item
$\varphi(\Per(X,f)) \subset \Per(Y,g)$;
\item
$\varphi(\NW(X,f)) \subset \NW(Y,g)$;
\item
if $(X,f)$ is non-wandering 
(resp.~is irreducible, resp.~is topologically mixing, resp.~has the weak specification property) 
then $(\varphi(X),g)$ is non-wandering
(resp.~is irreducible, resp.~is topologically mixing, resp.~has the weak specification property). 
\end{enumerate}
\end{proposition}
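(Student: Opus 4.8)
The key identity is $\varphi \circ f^n = g^n \circ \varphi$ for all $n \geq 0$, which follows immediately from $\varphi \circ f = g \circ \varphi$ by induction on $n$. Setting $Z \coloneqq \varphi(X)$, which is a closed invariant subset of $(Y,g)$, the map $\varphi$ becomes a \emph{surjective} morphism $\varphi \colon (X,f) \to (Z, g\vert_Z)$, and every statement in (iv) concerns $(Z, g\vert_Z)$; so I would first reduce to this surjective situation and prove everything there. Parts (i) and (ii) are then immediate: if $f^n(x) = x$ then $g^n(\varphi(x)) = \varphi(f^n(x)) = \varphi(x)$, giving $\varphi(\Per_n(X,f)) \subset \Per_n(Y,g)$, and (ii) follows by taking the union over $n \geq 1$.

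For (iii) and the non-wandering assertion of (iv), I would argue by pulling neighborhoods back through $\varphi$. Given $z = \varphi(x) \in Z$ and a neighborhood $V$ of $z$ (in $Y$ for (iii), in $Z$ for (iv)), the set $U \coloneqq \varphi^{-1}(V)$ is an open neighborhood of $x$ in $X$ by continuity. If $(X,f)$ is non-wandering (resp.\ if $x \in \NW(X,f)$) there is an $n \geq 1$ and a point $p \in U \cap f^n(U)$; writing $p = f^n(q)$ with $q \in U$, the images satisfy $\varphi(p) \in V$ and $\varphi(p) = g^n(\varphi(q))$ with $\varphi(q) \in V$, so $\varphi(p) \in V \cap g^n(V)$. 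Hence $\varphi(x)$ is non-wandering, and when every point of $X$ is non-wandering the same holds for every point of $Z$.

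The irreducibility and topological mixing clauses of (iv) are cleanest via the characterization by open sets (equivalently, by a dense forward orbit). Given non-empty open subsets $V_1, V_2$ of $Z$, their preimages $U_i \coloneqq \varphi^{-1}(V_i)$ are non-empty (by surjectivity of $\varphi \colon X \to Z$) and open. Applying irreducibility (resp.\ mixing) of $(X,f)$ to $U_1, U_2$ produces $n \geq 1$ (resp.\ all $n \geq N$) with $U_1 \cap f^n(U_2) \neq \varnothing$, and pushing a witness forward through $\varphi$ exactly as above yields $V_1 \cap g^n(V_2) \neq \varnothing$. Alternatively, if $x$ has dense forward orbit in $X$ then $\{g^n(\varphi(x)) : n \geq 1\} = \varphi(\{f^n(x): n \geq 1\})$ is dense in $Z = \varphi(X)$, since the continuous image of a dense subset is dense in the image.

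The weak specification property is the one clause where a uniform estimate is needed, and this is where I expect the only real (though mild) difficulty. Here I would invoke uniform continuity of $\varphi$, available because $X$ is compact: given $\varepsilon > 0$, choose $\delta > 0$ so that $d_X(a,b) \leq \delta$ implies $d_Y(\varphi(a), \varphi(b)) \leq \varepsilon$, and let $N$ be the constant supplied by weak specification of $(X,f)$ for $\delta$. Given intervals $I_1, \dots, I_k$ with the required gaps and points $x_1, \dots, x_k \in Z$, lift each $x_i$ to some $\tilde x_i \in \varphi^{-1}(x_i)$ and apply weak specification in $X$ to obtain $\tilde x \in X$ with $d_X(f^n(\tilde x), f^n(\tilde x_i)) \leq \delta$ for $n \in I_i$. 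Then $x \coloneqq \varphi(\tilde x) \in Z$ satisfies $d_Y(g^n(x), g^n(x_i)) = d_Y(\varphi(f^n(\tilde x)), \varphi(f^n(\tilde x_i))) \leq \varepsilon$ for all $n \in I_i$, since $g^n \circ \varphi = \varphi \circ f^n$. As $N$ depends only on $\delta$ and hence only on $\varepsilon$, this establishes weak specification for $(Z, g\vert_Z)$, completing (iv).
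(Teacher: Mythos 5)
Your proposal is correct and follows essentially the same route as the paper's proof: parts (i)--(ii) via the intertwining identity, part (iii) and the non-wandering/irreducible/mixing clauses of (iv) by pulling neighborhoods and open sets back through $\varphi$, and the weak specification clause via uniform continuity of $\varphi$ on the compact space $X$. Your explicit reduction to the surjective morphism onto $\varphi(X)$ and the remark about dense forward orbits are only cosmetic variations (and your quantifier ordering for weak specification, fixing $N$ from $\delta$ before the intervals are given, is if anything slightly cleaner than the paper's phrasing).
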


\begin{proof}
(i) If $x \in \Per_n(X,f)$, then $g^n(\varphi(x)) = \varphi(f^n(x))
= \varphi(x)$
so that $\varphi(x) \in \Per_n(Y,g)$. 
\par
(ii) Using (i), we get $\varphi(\Per(X,f)) = \varphi(\bigcup_{n \geq 1} \Per_n(X,f)) = \bigcup_{n \geq 1} \varphi(\Per_n(X,f)) \subset \bigcup_{n \geq 1} \Per_n(Y,g) = \Per(Y,g)$.
\par
(iii) Let $x \in \NW(X,f)$ and let $y \coloneqq \varphi(x)$.
Let $V$ be a neighborhood of $y$ in $\varphi(X)$.
Then $U \coloneqq \varphi^{-1}(V)$ is a neighborhood of $x$.
Since $x \in \NW(X,f)$, there exists $n \geq 1$ such that $V \cap f^n(V) \not= \varnothing$.
As $\varphi(V \cap f^n(V)) \subset \varphi(V) \cap \varphi(f^n(V)) = U \cap g^n(\varphi(V)) \subset U \cap g^n(U)$,
this implies $U \cap g^n(U) \not= \varnothing$. Therefore $y \in \NW(Y,g)$.
\par
(iv)
The fact that if $(X,f)$ is non-wandering then $(\varphi(X),g)$ is non-wandering  immediately follows from (iii).
\par
Observe that if $U$ and $V$ are any two non-empty open subsets of $\varphi(X)$,
then $U' \coloneqq \varphi^{-1}(U)$ and $V' \coloneqq \varphi^{-1}(V)$
are two non-empty open subsets of $X$.
As $U' \cap f^n(V') \not= \varnothing$ implies that  $\varnothing \not= \varphi(U' \cap f^n(V')) \subset \varphi(U') \cap \varphi(f^n(V')) = \varphi(U') \cap g^n(\varphi(V')) \subset U \cap g^n(V)$,
we deduce that if $(X,f)$ is irreducible (resp.~topologically mixing) then $(\varphi(X),g)$ is irreducible (resp.~topologically mixing).
\par
Finally, suppose that $(X,f)$ has the weak specification property
and let $\varepsilon > 0$.
Since $\varphi$ is uniformly continuous, there exists $\delta > 0$ such that
$d_X(x_1,x_2) \leq \delta$ implies $d_Y(\varphi(x_1),\varphi(x_2)) \leq \varepsilon$ for all $x_1,x_2 \in X$.
Suppose we are given
a finite sequence $I_1,I_2,\dots,I_k$ of finite intervals of $\Z$
and a sequence $y_1,y_2,\dots,y_k$ of points of $\varphi(X)$.
Choose $x_1,x_2, \dots, x_k \in X$ such that $\varphi(x_i) = y_i$ for each $i \in \{1,2,\dots,k\}$.
Since $(X,f)$ has the weak specification property,
there exists an integer $N \geq 1$ such that 
if $\min(I_{i+1}) -  \max(I_i) \geq N$ for each $i \in \{1,2,\dots,k-1\}$,
then there exists $x \in X$ such that
$d_X(f^n(x),f^n(x_i)) \leq \delta$ for all $i \in \{1,2,\dots,k\}$ and $n \in I_i$.
 This implies that  the point $y \coloneqq \varphi(x) \in \varphi(X)$ satisfies
$d_Y(g^n(y),g^n(y_i)) = d_Y(g^n(\varphi(x)),g^n(\varphi(x_i)))
= d_Y(\varphi(f^n(x)),\varphi(f^n(x_i))) \leq \varepsilon$ for all $i \in \{1,2,\dots,k\}$ and $n \in I_i$.
Thus,  $(\varphi(X),g)$ has the weak specification property.
This completes the proof of (iv).
\end{proof}

Two dynamical systems are said to be \emph{topologically conjugate} if they are isomorphic objects in the category of dynamical systems.
An isomorphism between two dynamical systems is also called a \emph{topological conjugacy}.
A surjective morphism $\pi \colon X \to Y$ is called a \emph{factor map}.
One says that $(X,f)$ is an \emph{extension} of $(Y,g)$ and that $(Y,g)$ is a \emph{factor} of $(X,f)$ if there exists a factor map $\pi \colon X \to Y$.
It follows from Assertion~(iv) in  Proposition ~\ref{p:prop-morphism}
 that every factor of a dynamical system which is 
 non-wandering (resp.~irreducible, resp.~topologically mixing, resp.~with the weak specification property) 
 is itself
 non-wandering (resp.~irreducible, resp.~topologically mixing, resp.~with the weak specification property).
\par
An \emph{endomorphism} of the dynamical system $(X,f)$ is a continuous map $\tau \colon X \to X$ such that $\tau \circ f = f \circ \tau$.
We write $\End(X,f)$ for the set of all endomorphisms of $(X,f)$.
Observe that $f^n \in \End(X,f)$ for all $n \in \Z$ and that $\End(X,f)$ is a monoid for the composition of maps.  
\par
\subsection{Stable equivalence, unstable equivalence, and homoclinicity}
Let $(X,f)$ be a dynamical system and let $d$ be a compatible metric on $X$.
Two points $x, y \in X$ are called \emph{stably equivalent} (resp.~\emph{unstably equivalent}, resp.~\emph{homoclinic}),
and one writes $x \sim_s y$ (resp.~$x \sim_u y$, resp.~$x \sim_h y$), 
if  $d(f^n(x),f^n(y)) \to 0$ as $n \to \infty$ (resp.~$n \to - \infty$, resp.~$|n| \to \infty$).
Clearly, $\sim_s$ (resp.~$\sim_u$, resp.~$\sim_h$)  is an equivalence relation on $X$.
By compactness of $X$, 
each of these equivalence relations  does not depend on the choice of the compatible metric $d$.
Given $x \in X$,
we denote by  $W^s(X,f,x)$ (resp.~$W^u(X,f,x)$, resp.~$W^h(X,f,x)$), or simply
 $W^s(x)$ (resp.~$W^u(x)$, resp.~$W^h(x)$),
 the stable (resp.~unstable, resp.~homoclinic) equivalence class of $x$.
 Observe that $W^h(x) = W^s(x) \cap W^u(x)$ and that $f^n(W^s(x)) = W^s(f^n(x))$ (resp.~$f^n(W^u(x)) = W^u(f^n(x))$, resp.~$f^n(W^h(x)) = W^h(f^n(x))$) 
 for all $x \in X$ and $n \in \Z$.

\begin{proposition}
\label{p:prop-morphism-stable}
Let  $\varphi \colon (X,f) \to (Y,g)$ be a morphism of dynamical systems
and let $x \in X$.
Then the following hold:
\begin{enumerate}[\rm (i)]
\item
$\varphi(W^s(X,f,x)) \subset W^s(Y,g,\varphi(x))$;
\item
$\varphi(W^u(X,f,x)) \subset W^u(Y,g,\varphi(x))$;
\item
$\varphi(W^h(X,f,x)) \subset W^h(Y,g,\varphi(x))$.
\end{enumerate}
\end{proposition}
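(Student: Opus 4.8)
The plan is to prove all three inclusions directly from the definitions of stable, unstable, and homoclinic equivalence, using only the continuity of $\varphi$ and the intertwining relation $\varphi \circ f = g \circ \varphi$. Since the three statements are formally parallel — differing only in whether we send $n \to +\infty$, $n \to -\infty$, or $|n| \to \infty$ — I would prove the stable case (i) in full and then indicate that (ii) and (iii) follow by the identical argument. The key algebraic observation, obtained by iterating $\varphi \circ f = g \circ \varphi$, is that $\varphi \circ f^n = g^n \circ \varphi$ for every $n \in \Z$; this lets me transport the orbit of $x$ under $f$ to the orbit of $\varphi(x)$ under $g$.

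For assertion (i), I would fix $y \in W^s(X,f,x)$, so that by definition $d_X(f^n(x), f^n(y)) \to 0$ as $n \to \infty$. The goal is to show $d_Y(g^n(\varphi(x)), g^n(\varphi(y))) \to 0$ as $n \to \infty$, which says exactly that $\varphi(y) \in W^s(Y,g,\varphi(x))$. Using the intertwining identity, the quantity to control rewrites as $d_Y(\varphi(f^n(x)), \varphi(f^n(y)))$. The remaining step is to show that the $\varphi$-images of two points that become arbitrarily close also become arbitrarily close. This is precisely \emph{uniform} continuity of $\varphi$, which holds because $\varphi$ is a continuous map on the compact metrizable space $X$ (a continuous map from a compact metric space is automatically uniformly continuous). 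Concretely, given $\varepsilon > 0$, uniform continuity furnishes $\delta > 0$ with $d_X(a,b) \leq \delta \implies d_Y(\varphi(a),\varphi(b)) \leq \varepsilon$; since $d_X(f^n(x),f^n(y)) \leq \delta$ for all large $n$, we get $d_Y(\varphi(f^n(x)),\varphi(f^n(y))) \leq \varepsilon$ for all large $n$, as required.

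Assertion (ii) is obtained verbatim by replacing ``$n \to \infty$'' with ``$n \to -\infty$'' throughout, the identity $\varphi \circ f^n = g^n \circ \varphi$ being valid for negative $n$ as well. Assertion (iii) then follows either by the same argument with ``$|n| \to \infty$'', or more economically by combining (i) and (ii) together with the already-noted identity $W^h(x) = W^s(x) \cap W^u(x)$: if $y \in W^h(X,f,x) = W^s(X,f,x) \cap W^u(X,f,x)$, then $\varphi(y)$ lies in both $W^s(Y,g,\varphi(x))$ and $W^u(Y,g,\varphi(x))$ by (i) and (ii), hence in their intersection $W^h(Y,g,\varphi(x))$.

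I do not expect any genuine obstacle here; the statement is essentially a functoriality property, and the only ingredient that requires a moment's thought is the appeal to uniform continuity (rather than mere continuity) of $\varphi$, which is what allows the pointwise convergence $d_X(f^n(x),f^n(y)) \to 0$ to be pushed forward through $\varphi$. This same use of compactness already appeared in the proof of Proposition~\ref{p:prop-morphism}(iv), so it fits the established pattern of the paper.
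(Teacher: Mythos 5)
Your proposal is correct and follows essentially the same route as the paper: fix $y \in W^s(X,f,x)$, push the convergence $d_X(f^n(x),f^n(y)) \to 0$ through $\varphi$, use $\varphi \circ f^n = g^n \circ \varphi$ to rewrite the images, and dispatch (ii) and (iii) by the same argument with $n \to -\infty$ and $|n| \to \infty$. If anything, you are slightly more careful than the paper, which invokes only ``continuity of $\varphi$'' where (as you correctly note) the step really uses \emph{uniform} continuity, guaranteed by compactness of $X$; your alternative derivation of (iii) from (i), (ii) and $W^h = W^s \cap W^u$ is also perfectly valid.
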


\begin{proof}
Let $d_X$ (resp.~$d_Y$) be a compatible metric on $X$ (resp.~$ Y$).
Take $x' \in W^s(X,f,x)$.
This means that  $d_X(f^n(x'),f^n(x)) \to 0$ as $n \to \infty$.
By continuity of $\varphi$, we have
\[
d_Y(\varphi(f^n(x')),\varphi(f^n(x))) \to 0
\] 
as $n \to \infty$.
As $\varphi \circ f = g \circ \varphi$, we deduce that
$d_Y(g^n(\varphi(x')),g^n(\varphi(x))) \to 0$ as $n \to \infty$.
 This shows that $\varphi(x') \in W^s(Y,g,\varphi(x))$ and hence that
 $\varphi(W^s(X,f,x)) \subset W^s(Y,g,\varphi(x))$.
 The proofs of the two other inclusions are similar.
 It suffices to replace $n \to \infty$ by $n \to - \infty$ (resp.~$|n| \to \infty$).
\end{proof}

\begin{proposition}
\label{p:homoclinic-f-f-k}
Let $X$ be a compact metrizable space and let $f \colon X \to X$ be a homeomorphism.
Let $k \geq 1$ be an integer and let $x, y \in X$.
Then the following conditions are equivalent.
\begin{enumerate}[\rm (i)]
\item
$x$ and $y$ are homoclinic points of  $(X,f)$;
\item
$x$ and $y$ are homoclinic points of  $(X,f^k)$.
\end{enumerate}
\end{proposition}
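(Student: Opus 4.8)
The plan is to fix a compatible metric $d$ on $X$ and to unwind both conditions in terms of $d$. Since $(f^k)^m = f^{km}$, condition (i) reads $d(f^n(x),f^n(y)) \to 0$ as $|n| \to \infty$, while condition (ii) reads $d(f^{km}(x),f^{km}(y)) \to 0$ as $|m| \to \infty$. With this reformulation, the implication (i) $\Rightarrow$ (ii) is immediate: the family $(d(f^{km}(x),f^{km}(y)))_{m \in \Z}$ is extracted from $(d(f^n(x),f^n(y)))_{n \in \Z}$ by retaining only the indices $n = km$, and since $k \geq 1$ one has $|km| \to \infty$ as $|m| \to \infty$; a subnet of a net tending to $0$ still tends to $0$.

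The real content lies in the converse (ii) $\Rightarrow$ (i). The idea is to write an arbitrary integer $n$ via Euclidean division as $n = km + r$ with $r \in \{0,1,\dots,k-1\}$, so that $f^n = f^r \circ f^{km}$, and then to transport the smallness of $d(f^{km}(x),f^{km}(y))$ through the map $f^r$. The crucial observation is that only finitely many remainders $r$ occur and that each $f^r$ is uniformly continuous, because $X$ is compact and $f^r$ is continuous. Concretely, I would fix $\varepsilon > 0$, choose for each $r \in \{0,\dots,k-1\}$ a number $\delta_r > 0$ witnessing the uniform continuity of $f^r$ at scale $\varepsilon$, and set $\delta \coloneqq \min_{0 \leq r < k} \delta_r > 0$.

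Invoking (ii), I would pick $M$ such that $d(f^{km}(x),f^{km}(y)) \leq \delta$ for all $|m| \geq M$. Writing $n = km + r$ as above gives $|m| = |n-r|/k \geq (|n|-(k-1))/k$, so that $|m| \geq M$ whenever $|n| \geq kM + (k-1)$. For every such $n$ one then obtains
\[
d(f^n(x),f^n(y)) = d\bigl(f^r(f^{km}(x)),\, f^r(f^{km}(y))\bigr) \leq \varepsilon,
\]
since $d(f^{km}(x),f^{km}(y)) \leq \delta \leq \delta_r$. As $\varepsilon > 0$ is arbitrary, this yields $d(f^n(x),f^n(y)) \to 0$ as $|n| \to \infty$, which is precisely condition (i).

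I expect the only delicate point to be the bookkeeping: securing a single modulus of continuity $\delta$ valid simultaneously for the finitely many maps $f^0,\dots,f^{k-1}$ (which is where compactness of $X$ enters, guaranteeing each $f^r$ is uniformly continuous), and tracking uniformly in the remainder $r$ how largeness of $|n|$ forces largeness of $|m|$, including for the negative indices.
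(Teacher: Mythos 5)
Your proof is correct and follows essentially the same route as the paper's: the forward implication by restricting to the subsequence of indices divisible by $k$, and the converse by writing $n = km + r$ with a bounded remainder and using the uniform continuity (via compactness of $X$) of the finitely many iterates $f^0,\dots,f^{k-1}$ to get a single modulus $\delta$. The only difference is cosmetic bookkeeping: the paper takes remainders in $\{1,\dots,k\}$ where you take $\{0,\dots,k-1\}$.
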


\begin{proof}
Suppose first that  $x$ and $y$ are homoclinic points of $(X,f)$. 
This means that $d(f^n(x),f^n(y)) \to 0$ as $|n| \to \infty$.
As this implies $d((f^k)^n(x),(f^k)^n(y)) = d(f^{kn}(x),f^{kn}(y)) \to 0$ as $|n| \to \infty$,
we deduce that $x$ and $y$ are homoclinic for $(X,f^k)$.
\par
Conversely, suppose now that  $x$ and $y$ are homoclinic points of $(X,f^k)$.
Let $\varepsilon > 0$.
By compactness of $X$,
the maps $f, \dots, f^k$ are uniform homeomorphisms.
Consequently, there exists $\delta > 0$ such that $d(x_1,x_2) \leq \delta$ implies $d(f^i(x_1),f^i(x_2)) \leq \varepsilon$ for all $x_1,x_2 \in X$ and $i \in \{1,\dots,k\}$.
Since $x$ and $y$ are homoclinic for $(X,f^k)$, there exists $N \geq 1$ such that
$d(f^{kn}(x),f^{kn}(y)) = d((f^k)^n(x),(f^k)^n(y))   \leq \delta$ for all $n$ such that $|n| \geq N$.
By our choice of $\delta$, this implies
$d(f^{kn + i}(x),f^{kn+i}(y)) = d(f^i(f^{kn}(x)),f^i(f^{kn}(y)) \leq \varepsilon$ for all $n,i$ such that $|n| \geq N$ and $i \in \{1,\dots,k\}$.
We deduce that   
 $d(f^{n}(x),f^{n}(y)) \leq \varepsilon$ for all $n$ such that $|n| \geq kN + 1$.
 As $\varepsilon > 0$ was arbitrary, this shows that the points $x$ and $y$ are homoclinic for $(X,f)$.
\end{proof}

One says that a morphism $\varphi \colon (X,f) \to (Y,g)$ of dynamical systems is \emph{pre-injective} if the restriction of $\varphi$ to every homoclinicity class of $X$ is injective.

\begin{proposition}
\label{p:post-compose-pre-inj}
Let $\varphi \colon (X,f) \to (Y,g)$ and $\psi \colon (Y,g) \to (Z,h)$ be morphisms of dynamical systems.
Suppose that $\psi \circ \varphi \colon (X,f) \to (Z,h)$ is pre-injective.
Then $\varphi$ is pre-injective.
\end{proposition}

\begin{proof}
Let $x_1$ and $x_2$ be homoclinic points in $X$ and suppose that $\varphi(x_1) = \varphi(x_2)$.
We then have $(\psi \circ \varphi)(x_1) = \psi(\varphi(x_1)) = \psi(\varphi(x_2)) = (\psi \circ \varphi)(x_2)$.
As $\psi \circ \varphi$ is pre-injective, this implies that $x_1 = x_2$.
This shows that $\varphi$ is pre-injective.
\end{proof}

\begin{corollary}
\label{c:pre-injective-powers}
Let $(X,f)$ be a dynamical system and let $\tau \in \End(X,f)$.
Suppose that there exists an integer $k \geq 1$ such that $\tau^k$ is pre-injective.
Then $\tau$ is pre-injective. 
\end{corollary}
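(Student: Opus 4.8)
The plan is to deduce this immediately from Proposition~\ref{p:post-compose-pre-inj} by exhibiting $\tau^k$ as a composite of two morphisms of $(X,f)$ into itself, one of which is $\tau$. First I would recall that $\End(X,f)$ is a monoid under composition, so that every power $\tau^m$ with $m \in \N$ is again an endomorphism of $(X,f)$; in particular $\tau^{k-1} \in \End(X,f)$. Since an endomorphism of $(X,f)$ is by definition a continuous self-map commuting with $f$, each of $\tau$ and $\tau^{k-1}$ is a morphism of dynamical systems from $(X,f)$ to $(X,f)$.

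Next I would write the factorization
\[
\tau^k = \tau^{k-1} \circ \tau,
\]
and apply Proposition~\ref{p:post-compose-pre-inj} with $(X,f) = (Y,g) = (Z,h)$, taking $\varphi \coloneqq \tau$ and $\psi \coloneqq \tau^{k-1}$. By hypothesis $\psi \circ \varphi = \tau^k$ is pre-injective, so the proposition yields that $\varphi = \tau$ is pre-injective, which is exactly the desired conclusion.

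There is essentially no obstacle here: the statement is a formal consequence of the fact that pre-injectivity of a composite forces pre-injectivity of the map applied first, combined with the monoid structure of $\End(X,f)$. The only point worth a word is the boundary case $k = 1$, where $\tau^{k-1} = \tau^0 = \Id_X$; since $\Id_X$ is trivially a morphism of $(X,f)$, the same argument goes through unchanged, and the conclusion is then immediate from the hypothesis itself.
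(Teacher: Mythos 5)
Your proof is correct and matches the paper's argument exactly: the paper also writes $\tau^k = \tau^{k-1} \circ \tau$ and applies Proposition~\ref{p:post-compose-pre-inj} with $\varphi = \tau$ and $\psi = \tau^{k-1}$. Your remark on the case $k=1$ is a minor addition the paper leaves implicit, but nothing more needs to be said.
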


\begin{proof}
This immediately follows from Proposition~\ref{p:post-compose-pre-inj} after writing $\tau^k = \tau^{k - 1} \circ \tau$.
\end{proof}

\subsection{Expansivity}
Let $X$ be a compact metrizable space and
let $d$ be a compatible metric on $X$.
A homeomorphism $f \colon X \to X$ is called \emph{expansive} if there exists a constant $\eta > 0$ such that if $x,y \in X$ and $x \not= y$ then there exists $n \in \Z$ such that $d(f^n(x),f^n(y)) > \eta$.
One then says that $\eta$ is an \emph{expansivity constant} for $(X,f,d)$.
By compactness, the definition of expansivity for a homeomorphism of $X$ does not depend on the choice of the metric $d$.
Note that $\eta$ is an expansivity constant for $(X,f,d)$ if and only if it satisfies the following condition:
for all $x,y \in X$, if one has $d(f^n(x),f^n(y)) \leq \eta$ for all $n \in \Z$ then $x = y$.

\begin{lemma}
\label{l:exp-finitely-p-per}
local expansivity sauvegardeLet $X$ be a compact metrizable space and let $f \colon X \to X$ be an expansive homeomorphism.
Then the set $\Per_n(X,f)$ is finite for every integer $n \geq 1$.
\end{lemma}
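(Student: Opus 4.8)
The plan is to exploit the fact that, although expansivity quantifies over all integer times $m \in \Z$, a point of period $n$ has an orbit visiting only the finitely many points $x, f(x), \dots, f^{n-1}(x)$; hence on $\Per_n(X,f)$ the expansivity condition effectively collapses to a condition over the finite window $\{0,1,\dots,n-1\}$. Combining this with the sequential compactness of $\Per_n(X,f)$ will force finiteness.

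Fix a compatible metric $d$ on $X$ and let $\eta > 0$ be an expansivity constant for $(X,f,d)$, so that if $x \neq y$ then $d(f^m(x),f^m(y)) > \eta$ for some $m \in \Z$. Fix $n \geq 1$. Since $\Per_n(X,f)$ is a closed subset of the compact space $X$, it is itself compact. The first step I would carry out is a reduction: if $x, y \in \Per_n(X,f)$ satisfy $d(f^i(x),f^i(y)) \leq \eta$ for all $i \in \{0,1,\dots,n-1\}$, then in fact $d(f^m(x),f^m(y)) \leq \eta$ for every $m \in \Z$. Indeed, writing $m = qn + i$ with $i \in \{0,\dots,n-1\}$ and using $f^n(x) = x$, $f^n(y) = y$, one has $f^m(x) = f^i((f^n)^q(x)) = f^i(x)$ and similarly $f^m(y) = f^i(y)$. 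By expansivity this forces $x = y$; equivalently, any two \emph{distinct} points of $\Per_n(X,f)$ are already separated by more than $\eta$ somewhere in the window $\{0,\dots,n-1\}$.

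To conclude, I would argue by contradiction. If $\Per_n(X,f)$ were infinite, then by compactness it would contain a sequence $(x_k)$ of pairwise distinct points converging to some $x \in \Per_n(X,f)$. For each fixed $i \in \{0,\dots,n-1\}$ the map $f^i$ is continuous, so $f^i(x_k) \to f^i(x)$; hence for $k,l$ large enough we would have $d(f^i(x_k),f^i(x_l)) \leq \eta$ simultaneously for all $i \in \{0,\dots,n-1\}$. By the reduction above this gives $x_k = x_l$, contradicting the pairwise distinctness of the $x_k$. Therefore $\Per_n(X,f)$ is finite.

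The only delicate point is the reduction step, namely that expansivity over $\Z$ collapses to the finite window $\{0,\dots,n-1\}$ for $n$-periodic points; this is exactly where the hypothesis $f^n(x) = x$ enters, and everything else is a routine compactness argument.
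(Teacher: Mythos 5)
Your proof is correct and follows essentially the same strategy as the paper: periodicity of $x$ and $y$ collapses the expansivity condition over all of $\Z$ to the finite window $\{0,\dots,n-1\}$, and compactness of $\Per_n(X,f)$ then yields two distinct periodic points whose full orbits stay within $\eta$ of each other, contradicting expansivity. The only (inessential) difference is bookkeeping: the paper gets the close pair via uniform continuity of $f,\dots,f^n$ and an $\varepsilon$-close pair of distinct points, while you use a convergent sequence of distinct points and pointwise continuity of each $f^i$.
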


\begin{proof}
Choose a compatible metric $d$ on $X$ and let $\eta > 0$ be an expansivity constant for $(X,f,d)$.
Let $n \geq 1$ be an integer.
As the maps $f,f^2,\dots,f^n$ are uniformly continuous, there exists $\varepsilon > 0$ such that
$d(f^i(x),f^i(y)) \leq \eta$ for all $1 \leq i \leq n$ and $x,y \in X$ with $d(x,y) \leq \varepsilon$.
Suppose by contradiction that $\Per_n(X,f)$ is infinite.
Then, by compactness of $X$, there are distinct points $x,y \in \Per_n(X,f)$ such that $d(x,y) \leq \varepsilon$.
We then have $d(f^i(x),f^i(y)) \leq \eta$ for all $i \in \Z$, a contradiction since $\eta$ is an expansivity constant for $(X,f,d)$.
This shows that $\Per_n(X,f)$ is finite.
\end{proof}

\subsection{Subshifts} (See~\cite{lind-marcus-second}, \cite[Chapter~1]{csc-ecag})
Let $A$ be a finite discrete space.
We equip the set $A^{\Z}$ of all bi-infinite sequences $u = (u_n)_{n \in \Z}$, where $u_n \in A$ for all $n \in \Z$,   with the topology of pointwise convergence.
If we regard $A^{\Z}$ as the product of a family of copies of $A$ indexed by $\Z$,
the topology on $A^{\Z}$ is the product topology.
The space $A^{\Z}$ is compact, totally disconnected, and metrizable.
A compatible metric $d$ on $A^{\Z}$ is defined by
setting, for all $u,v \in A^{\Z}$,
\[
d(u,v) \coloneqq
\begin{cases}
0 &\text{ if }u = v, \\ 
2^{-\inf\{n \in \N : (u_{-n},u_n) \not= (v_{-n},v_n)\}} &\text{ if }u \not= v.
\end{cases}
\]
\par
The \emph{full shift} over $A$ is the dynamical system $(A^{\Z},\sigma)$, where $\sigma \colon A^{\Z} \to A^{\Z}$ is the homeomorphism defined, for all $u \in A^{\Z}$,  by
$\sigma(u) \coloneqq v$ with $v_n \coloneqq u_{n - 1}$ for all $n \in \Z$.
Note that $(A^{\Z},\sigma)$ is expansive since $1/2$ is clearly an expansivity constant for $(A^{\Z},\sigma,d)$.
\par
Two sequences $u,v \in A^{\Z}$ are $\sigma$-homoclinic if and only if there exists a finite subset $K \subset \Z$ such that $u_n = v_n$ for all $n \in \Z \setminus K$ \cite[Exercise~5.2.(a)]{csc-ecag}.
\par
A closed $\sigma$-invariant subset $\Sigma \subset A^{\Z}$ is called a \emph{subshift}.
 \par
 Let $A^*$ denote the free monoid based on $A$.
Every element of  $A^*$ can be uniquely written as a \emph{word} $w = a_1a_2\cdots a_n$, where $n \geq 0$ is the \emph{length} of the word $w$, and $a_i \in A$ for all $1 \leq i \leq n$.
The monoid operation on $A^*$ is the concatenation of words.
The \emph{language} of a subshift $\Sigma \subset A^{\Z}$ is the subset $L(\Sigma) \subset A^*$ consisting of all words $w \in A^*$ such that there exist $n \geq 0$ and $u \in \Sigma$ such that
$w = u_1u_2\cdots u_n$.
\par
If $\Sigma \subset A^*$ is a subshift,
one shows that the dynamical system $(\Sigma,\sigma)$ is irreducible  if and only if, for all $u,v \in L(\Sigma)$, there exists $w \in A^*$ such that $u w v \in L(\Sigma)$
\cite[Exercise~1.76]{csc-ecag}.
\par  
A subshift $\Sigma \subset A^{\Z}$ is called a \emph{subshift of finite type} if there exist an integer $m \geq 0$ and a subset $F \subset A^*$  consisting of words of length $m + 1$ such that
$\Sigma$ is the set of all sequences $u = (u_n)_{n \in \Z} \in A^{\Z}$
satisfying $u_n u_{n+1} \cdots u_{n + m} \in F$ for all $n \in \Z$.
One then says that $\Sigma$ is an $m$-step subshift of finite type and that $F$ is a \emph{defining set of admissible words} for $\Sigma$.
Observe that the full shift $A^{\Z}$ is a $0$-step subshift of finite type admitting $A$ as a defining set of admissible words.
 \par
 One says that a subshift $\Sigma \subset A^{\Z}$ is \emph{sofic} if there exists a finite set $B$ such that $\Sigma$ is a factor of some subshift of finite type $\Sigma' \subset B^{\Z}$.
\par
A subshift $\Sigma \subset A^{\Z}$ is said to be \emph{strongly irreducible} if there exists an integer $N \geq 1$ such that the following holds: 
if $I_1$ and $I_2$ are finite intervals of $\Z$ such that $\min(I_2) - \max(I_1) \geq N$ and $x_1, x_2 \in \Sigma$, then there exists $x \in \Sigma$ such that $x\vert_{I_1} = x_1\vert_{I_1}$ and $x\vert_{I_2} = x_2\vert_{I_2}$. 
It can be shown that a subshift $\Sigma \subset A^{\Z}$ has the weak specification property if and only if $\Sigma$ is strongly irreducible
(cf.~\cite[Proposition~A.1]{li-goe-spec-2019}, \cite[Exercise~1.69]{csc-ecag}). 
Every  topologically mixing subshift  of finite type is strongly irreducible 
(cf.~\cite[Exercise~1.80]{csc-ecag}). 
Consequently, we have the following.

\begin{proposition}
\label{p:tmftwsp}
Let $A$ be a finite set and let $\Sigma \subset A^{\Z}$ be a topologically mixing subshift of finite type.
Then $\Sigma$ has the weak specification property.
\end{proposition}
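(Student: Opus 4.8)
The plan is to obtain the statement as an immediate consequence of the two facts recorded just before it. Since $\Sigma$ is a topologically mixing subshift of finite type, the implication that every topologically mixing subshift of finite type is strongly irreducible shows that $\Sigma$ is strongly irreducible; and since, for subshifts, the weak specification property is equivalent to strong irreducibility, it follows at once that $\Sigma$ has the weak specification property. Thus, granting the two cited results, the proof is a one-line combination.

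For a self-contained treatment I would instead prove the implication \emph{topologically mixing} $\Rightarrow$ \emph{weak specification} directly, since this is where the only genuine content lies. Write $\Sigma$ as an $m$-step subshift of finite type with defining set $F$ of admissible words of length $m + 1$. Unwinding the definition of the weak specification property for the shift metric on $A^{\Z}$, one sees that it reduces to the following combinatorial assertion: there is an integer $N \geq 1$ such that any two admissible patterns carried by finite intervals $I_1, I_2$ with $\min(I_2) - \max(I_1) \geq N$ extend to a common point of $\Sigma$, together with the passage from two intervals to finitely many. The first step --- the two-interval case --- is exactly strong irreducibility. The key observation making it tractable is that, because $\Sigma$ has memory $m$, gluing a left pattern to a right pattern across a gap only requires connecting the length-$m$ suffix of the left pattern to the length-$m$ prefix of the right pattern by an admissible word filling the gap.

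The main obstacle is the passage from the pair-dependent thresholds supplied by topological mixing to a single uniform gap $N$. Topological mixing furnishes, for each ordered pair $(s,t)$ of length-$m$ words occurring in $L(\Sigma)$, a threshold $N_{s,t}$ beyond which an admissible bridge from $s$ to $t$ of the prescribed length exists; but strong irreducibility demands one $N$ valid for all patterns simultaneously. This is resolved by the two finiteness features of the setting: the alphabet $A$ is finite, and the subshift has finite memory $m$, so there are only finitely many relevant boundary words $s, t$, and one may simply take $N := \max_{s,t} N_{s,t}$. Once strong irreducibility is established in this way, the general $k$-interval form of the weak specification property follows by a routine induction on $k$, gluing one interval at a time while keeping each gap at least $N$; and the exact agreement furnished by strong irreducibility implies the approximate $\varepsilon$-agreement required by the definition, since closeness in the shift metric is governed by agreement on a central block.
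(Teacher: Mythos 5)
Your first paragraph is precisely the paper's own proof: the paper states the proposition as an immediate consequence (``Consequently, we have the following'') of the same two cited facts, namely that for subshifts the weak specification property is equivalent to strong irreducibility and that every topologically mixing subshift of finite type is strongly irreducible. Your additional self-contained sketch correctly fills in the content of those citations --- the finite-memory gluing across a gap and the uniformization of the mixing thresholds by taking the maximum over the finitely many boundary word pairs are exactly the right ideas --- so there is nothing to correct.
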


\subsection{The pseudo-orbit tracing property}
Let $(X,f)$ be a dynamical system and let $d$ be a compatible metric on $X$.
\par
Given $\varepsilon > 0$, one says that a sequence $(u_n)_{n \in \Z}$ of points of $X$ is $\varepsilon$-\emph{traced} by the orbit of a point $x \in X$ if one has $d(u_n,f^n(x)) \leq \varepsilon$ for all $n \in \Z$.
\par
Given $\delta > 0$, one says that a sequence $(u_n)_{n \in \Z}$ of points of $X$ is a $\delta$-\emph{pseudo-orbit} if one has
$d(u_{n+1},f(u_n)) \leq \delta$ for all $n \in \Z$.
\par
One says that the dynamical system $(X,f)$ has the \emph{pseudo-orbit tracing property} if for every $\varepsilon > 0$, there exists $\delta > 0$ such that every $\delta$-pseudo-orbit in $X$ is $\varepsilon$-traced by the orbit of some point in $X$.
By compactness of $X$,  the fact that $f$ has the pseudo-orbit tracing property or not does not depend on the choice of the compatible metric $d$.

 \subsection{Smale spaces}  
We first recall  Ruelle's original definition of a Smale space (cf.~\cite[Section~7.1]{ruelle-thermodynamic-2nd}, \cite[Definition~2.1.6]{putnam-homology-smale-spaces-2014}
\cite[Section~2.2]{putnam-notes-smale-spaces-2015}).
Let $X$ be a compact metrizable space equipped with a homeomorphism $f \colon X \to X$.
One says that $(X,f)$ is a \emph{Smale space} if there exist a compatible metric $d$ on $X$,  constants $\varepsilon > 0$ and $0 < \lambda < 1$,
and a continuous map 
\[
 [\cdot,\cdot] \colon \Delta_{\varepsilon} \coloneqq \{(x,y) \in X \times X : d(x,y) \leq \varepsilon\} \to X
\]
satisfying the following conditions for all $x,y,z \in X$:
\begin{enumerate}[\rm (Sm1)]
\item
$[x,x] = x$ for all $x \in X$;
\item
$[[x, y], z] = [x, z]$ whenever both sides are defined;
\item
$[x, [y, z]] = [x, z]$ whenever both sides are defined;
\item
$[f(x),f(y)] = f([x,y])$ whenever both sides are defined;
\item
$d(f(y),f(z)) \leq \lambda d(y,z)$ whenever $(y,x),(z,x) \in \Delta_\varepsilon$ and $[y,x] = x= [z,x]$;
\item
$d(f^{-1}(y),f^{-1}(z)) \leq \lambda d(y,z)$ whenever $(x,y),(x,z) \in \Delta_{\varepsilon}$ and $[x,y] = x = [x,z]$.
\end{enumerate}

It is known that every Smale space is expansive \cite[Section~7.3]{ruelle-thermodynamic-2nd}, \cite[Proposition~2.1.9]{putnam-homology-smale-spaces-2014}
and has the pseudo-orbit tracing property 
\cite[Section~7.3]{ruelle-thermodynamic-2nd}.
Conversely, if $(X,f)$ is a dynamical system such that $f$ is expansive and has the pseudo-orbit tracing property, then $(X,f)$ is a Smale space \cite[Theorem]{ombach-smale-spaces}.
Thus, an equivalent but more concise definition of Smale spaces is the following.

\begin{definition}[Smale space]
A \emph{Smale space} is a dynamical system $(X,f)$ consisting of a compact metrizable space $X$ equipped with an expansive  homeomorphism $f \colon X \to X$ which has the pseudo-orbit tracing property.
\end{definition}

The following result is known as~\emph{Anosov's closing lemma} (cf.~\cite[3.8]{bowen-equilibrium-2008}, \cite[Section~7.3]{ruelle-thermodynamic-2nd}).
It is a key ingredient in our proof of the first part of Theorem~\ref{t:main-surjunctive}.

\begin{theorem}[Anosov closing lemma for Smale spaces]
\label{t:nw-closure-per}
Let $(X,f)$ be a Smale space.
Then the set $\Per(X,f)$ is dense in  $\NW(X,f)$.
In other words, we have $\overline{\Per(X,f)} = \NW(X,f)$. 
\end{theorem}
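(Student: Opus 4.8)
The inclusion $\overline{\Per(X,f)} \subset \NW(X,f)$ is immediate: since $\Per(X,f) \subset \NW(X,f)$ and $\NW(X,f)$ is closed, taking closures yields the claim. The whole content therefore lies in the reverse inclusion $\NW(X,f) \subset \overline{\Per(X,f)}$, that is, in showing that every non-wandering point can be approximated arbitrarily well by periodic points. The plan is to fix $x \in \NW(X,f)$ and a target radius $\rho > 0$, and to produce a periodic point within distance $\rho$ of $x$ by feeding a suitable \emph{periodic} pseudo-orbit into the pseudo-orbit tracing property and then invoking expansivity to force the tracing point to be genuinely periodic.

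First I would fix an expansivity constant $\eta > 0$ for $(X,f,d)$ and set $\varepsilon \coloneqq \min(\rho/2, \eta/2)$, so that $2\varepsilon \leq \eta$. Applying the pseudo-orbit tracing property to this $\varepsilon$ yields some $\delta > 0$ such that every $\delta$-pseudo-orbit is $\varepsilon$-traced; shrinking $\delta$ if necessary, I may also assume $\delta \leq \rho$. Now I would exploit that $x$ is non-wandering: applying the definition to the neighborhood $U \coloneqq \{y \in X : d(y,x) < \delta/2\}$ produces an integer $n \geq 1$ and a point $w \in U$ with $f^n(w) \in U$, so that $d(w,x) < \delta/2$ and $d(f^n(w),x) < \delta/2$, whence $d(w, f^n(w)) < \delta$ by the triangle inequality.

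The key construction is the $n$-periodic sequence $(u_k)_{k \in \Z}$ defined by $u_k \coloneqq f^{r}(w)$, where $r \in \{0,1,\dots,n-1\}$ is the residue of $k$ modulo $n$. This is a $\delta$-pseudo-orbit: at every non-wraparound index one has $u_{k+1} = f(u_k)$ exactly, and at each wraparound index the only discrepancy is $d(u_{k+1}, f(u_k)) = d(w, f^n(w)) < \delta$. By the tracing property there is a point $p \in X$ with $d(u_k, f^k(p)) \leq \varepsilon$ for all $k \in \Z$. Because the pseudo-orbit satisfies $u_{k+n} = u_k$, I would compare the orbit of $p$ with that of $f^n(p)$: for every $k$ both $f^k(p)$ and $f^{k+n}(p) = f^k(f^n(p))$ lie within $\varepsilon$ of $u_k$, so $d(f^k(p), f^k(f^n(p))) \leq 2\varepsilon \leq \eta$ for all $k \in \Z$. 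Expansivity then forces $f^n(p) = p$, so $p$ is periodic, and finally $d(p,x) \leq d(p, w) + d(w, x) \leq \varepsilon + \delta/2 \leq \rho$, exhibiting a periodic point within $\rho$ of $x$.

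I expect the main obstacle to be the interplay in the last step: one must simultaneously arrange that the tracing tolerance $\varepsilon$ is small enough for expansivity to upgrade the traced point $p$ to an honest periodic point (the condition $2\varepsilon \leq \eta$) and small enough, together with the pseudo-orbit radius $\delta$, to keep $p$ within the prescribed distance $\rho$ of $x$. Getting the order of quantifiers right --- first $\eta$, then $\varepsilon$, then $\delta$ from the tracing property, and only then invoking the non-wandering hypothesis --- is what makes the argument close cleanly, and is the one place where genuine care is required.
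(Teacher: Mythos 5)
Your proof is correct. Note that the paper does not actually prove Theorem~\ref{t:nw-closure-per}: it quotes it as a known result with references to Bowen and Ruelle, so there is no internal proof to compare against. Your argument is the classical shadowing proof of the Anosov closing lemma, and it is precisely the proof that the paper's chosen definition of a Smale space (expansive homeomorphism with the pseudo-orbit tracing property) makes available without any appeal to the local product structure $[\cdot,\cdot]$ of Ruelle's original definition. All the steps check out: the inclusion $\overline{\Per(X,f)} \subset \NW(X,f)$ follows from closedness of $\NW(X,f)$, which the paper records; the non-wandering condition $U \cap f^n(U) \neq \varnothing$ with $U = B(x,\delta/2)$ does give $w$ with $d(w,x) < \delta/2$ and $d(f^n(w),x) < \delta/2$, hence $d(w,f^n(w)) < \delta$; the $n$-periodically repeated orbit segment $(u_k)$ is a genuine $\delta$-pseudo-orbit whose only defects occur at the wraparound indices; and since $u_{k+n} = u_k$ for all $k$, both $f^k(p)$ and $f^k(f^n(p))$ are within $\varepsilon$ of $u_k$ for every $k \in \Z$, so $2\varepsilon \leq \eta$ and expansivity force $f^n(p) = p$, while $d(p,x) \leq d(p,w) + d(w,x) \leq \varepsilon + \delta/2 \leq \rho$. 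Your quantifier ordering (expansivity constant, then tracing tolerance, then $\delta$, then the non-wandering hypothesis) is exactly right, and the remark that shrinking $\delta$ is harmless is valid since any $\delta'$-pseudo-orbit with $\delta' \leq \delta$ is a $\delta$-pseudo-orbit. In short, you have supplied a complete, self-contained proof of a statement the paper delegates to the literature.
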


Walters~\cite[Theorem~1]{walters-potp-1978} proved that a subshift $\Sigma \subset A^{\Z}$ has the pseudo-orbit tracing property if and only if $\Sigma$ is of finite type.
As full shifts are expansive and every subsystem of an expansive dynamical system is itself expansive,
it follows that a subshift  $\Sigma \subset A^{\Z}$ is a  Smale space if and only if $\Sigma$ is  of finite type.
Moreover, if $(X,f)$ is a  Smale space with $X$ totally disconnected, then there exist a finite set $A$ and a $1$-step subshift of finite type $\Sigma \subset A^{\Z}$ such that $(X,f)$ is topologically conjugate to $(\Sigma,\sigma)$. 

 \subsection{Coding of topologically mixing Smale spaces}
The theory of Markov partitions used by Rufus Bowen in the setting of basic sets of Axiom A diffeomorphisms directly extends to Smale spaces an yields 
in particular  the following result 
(see \cite[Theorem~7.6]{ruelle-thermodynamic-2nd},
\cite{bowen-markov-1970},
\cite{bowen-cbms-1978},
  \cite[Theorem~3.18 and Proposition~3.19]{bowen-equilibrium-2008}).
It is a key ingredient in our proof of Theorem~\ref{t:goe-for-irred-smale}.

\begin{theorem}
\label{t:coding-smale}
Let $(X,f)$ be a topologically mixing Smale space.
Then there exist a finite set $A$ and a topologically mixing subshift of finite type $\Sigma \subset A^{\Z}$
such that $(X,f)$ is a factor of $(\Sigma,\sigma)$.
\end{theorem}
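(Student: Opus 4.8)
The plan is to code $(X,f)$ symbolically by building a \emph{Markov partition} and reading off the itineraries of points, in the manner of Bowen and Sinai. First I would exploit the local product structure: fix a compatible metric $d$, constants $\varepsilon>0$ and $0<\lambda<1$, and the bracket map $[\cdot,\cdot]$ from the definition of a Smale space, with $\varepsilon$ also chosen below an expansivity constant $\eta$ for $(X,f,d)$. Call a set $R\subset X$ of diameter at most $\varepsilon$ a \emph{rectangle} if $[x,y]\in R$ for all $x,y\in R$; each rectangle then splits as a product of a stable slice $[x,R]=\{[x,y]:y\in R\}$ and an unstable slice $[R,x]=\{[y,x]:y\in R\}$ through any $x\in R$, which lets one define its stable and unstable boundaries. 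By compactness, $X$ is covered by finitely many rectangles of small diameter.

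The core of the argument, and the main obstacle, is to refine such a cover into a Markov partition $\mathcal R=\{R_1,\dots,R_k\}$: finitely many \emph{proper} rectangles (each equal to the closure of its interior, with pairwise disjoint interiors whose union is dense) satisfying the \emph{Markov property} --- whenever $x\in\operatorname{int}R_i$ and $f(x)\in\operatorname{int}R_j$, the image $f([x,R_i])$ of the stable slice lies in $[f(x),R_j]$, and symmetrically $f^{-1}([R_j,f(x)])\subset[R_i,x]$. Carrying this out requires controlling the boundary set $\partial\mathcal R=\bigcup_i\partial R_i$: one must show that $f$ carries the stable boundary into the stable boundary and $f^{-1}$ the unstable boundary into the unstable boundary, and that $\partial\mathcal R$ is closed and nowhere dense. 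This is the delicate, technical heart of the proof, and it is precisely here that the pseudo-orbit tracing (shadowing) property enters, guaranteeing that prescribed admissible sequences of rectangles are genuinely followed by orbits.

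Granting the Markov partition, the symbolic coding is routine. Put $A=\{1,\dots,k\}$ and define a matrix $B\in\{0,1\}^{A\times A}$ by $B_{ij}=1$ if and only if $\operatorname{int}R_i\cap f^{-1}(\operatorname{int}R_j)\neq\varnothing$; let $\Sigma\subset A^{\Z}$ be the associated $1$-step subshift of finite type, consisting of all $a=(a_n)_{n\in\Z}$ with $B_{a_na_{n+1}}=1$ for every $n$. For each such $a$ the set $\bigcap_{n\in\Z}f^{-n}(\overline{R_{a_n}})$ is a single point $\pi(a)$: nonempty because the Markov property makes all finite intersections nonempty and $X$ is compact, and a singleton because the rectangles have diameter below the expansivity constant. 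The map $\pi\colon\Sigma\to X$ is continuous and satisfies $\pi\circ\sigma=f\circ\pi$, and it is surjective since the $R_i$ cover $X$ and every point admits an admissible itinerary; hence $\pi$ is a factor map exhibiting $(X,f)$ as a factor of $(\Sigma,\sigma)$.

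It remains to arrange that $\Sigma$ be topologically mixing. Since $(X,f)$ is topologically mixing it is irreducible, so it has a point $x$ with dense forward orbit; fix $a\in\Sigma$ with $\pi(a)=x$. As $n\to+\infty$ the coordinates of $a$ eventually remain within a single communicating class of the graph of $B$, so the $\sigma$-limit set $\omega(a)$ lies in the irreducible sub-SFT $\Sigma'\subset\Sigma$ supported on that class. Using $\pi\circ\sigma=f\circ\pi$ and compactness one gets $\pi(\omega(a))=\omega(x)$, and $\omega(x)=X$ because $X$ has no isolated points (a topologically mixing system with an isolated point is a single point) and a dense forward orbit in a perfect compact space accumulates everywhere; hence $\pi(\Sigma')=X$. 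Finally, since the orbit of every point avoiding the nowhere-dense set $\partial\mathcal R$ has a unique itinerary, $\pi$ is injective over a dense $G_\delta$ subset of $X$, so a nontrivial period of $\Sigma'$ would descend to a cyclic decomposition of $X$ into closed sets with nonempty pairwise-disjoint interiors, contradicting topological mixing. Therefore $\Sigma'$ is aperiodic, i.e.\ topologically mixing, and it is the desired subshift of finite type.
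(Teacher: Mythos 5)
The paper gives no proof of this theorem at all: it is quoted from the literature, and the proof behind the citation is precisely the Bowen--Ruelle Markov-partition machinery that you sketch, so your overall route coincides with the source the paper invokes. Two remarks on the comparison. First, your load-bearing step---the existence of a Markov partition by proper rectangles with invariant, nowhere dense boundaries---is asserted rather than proved; since this is exactly the content of the results of Bowen and Ruelle that the paper cites, the omission mirrors the paper's own treatment, but you should be aware that as a standalone argument this is the actual theorem, not a preliminary. Second, the one place where you genuinely depart from the standard treatment is the transfer of topological mixing from $(X,f)$ to the subshift. The argument in the cited source (Proposition~3.19 in \cite{bowen-equilibrium-2008}) is shorter and avoids your detour: for any symbols $i,j$, mixing of $(X,f)$ gives $N$ such that $f^n(\operatorname{int} R_i)\cap \operatorname{int} R_j\neq\varnothing$ for all $n\geq N$; this nonempty open set meets the dense $G_\delta$ of points whose full orbit avoids $\bigcup_{m\in\Z} f^{-m}(\partial\mathcal{R})$, and the unique itinerary of such a point yields an admissible word of length $n$ connecting $i$ and $j$. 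Hence the transition matrix is irreducible and aperiodic, so the full coding subshift $\Sigma$ is already topologically mixing, with no need to pass to a terminal communicating class. Your alternative does work---the eventual-single-class claim holds because the condensation of the transition graph is a finite acyclic graph, $\pi(\omega(a))=\omega(x)=X$ holds as you say (using that a mixing system with more than one point has no isolated points), and a period $p\geq 2$ for $\Sigma'$ would give closed sets $\pi(D_0),\dots,\pi(D_{p-1})$ covering $X$, cyclically permuted by $f$, with nonempty interiors (Baire, plus $f$ a homeomorphism) that are pairwise disjoint (via unique-itinerary points); then $U\coloneqq\operatorname{int}\pi(D_0)$ satisfies $U\cap f^n(U)=\varnothing$ whenever $p\nmid n$, because an open subset of $\pi(D_j)$ lies in $\operatorname{int}\pi(D_j)$, contradicting mixing---but it is considerably longer, delivers mixing only for the sub-SFT $\Sigma'$ rather than for $\Sigma$ itself, and those final details (disjointness of interiors, the exact contradiction with mixing) are precisely the ones you left compressed and would need to be written out.
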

\subsection{The Smale-Bowen-Ruelle decomposition theorem}

The following result is known as the \emph{Smale-Bowen-Ruelle spectral decomposition theorem} for non-wandering Smale spaces
(see~\cite[Section~3.B]{bowen-equilibrium-2008}, \cite[Section~7.4]{ruelle-thermodynamic-2nd},
\cite[Section~4.5]{putnam-notes-smale-spaces-2015}).

\begin{theorem}[The Smale-Bowen-Ruelle decomposition theorem]
\label{t:smale-decomposition}
Let $(X,f)$ be a non-wandering Smale space.
Then the following hold:
\begin{enumerate}[\rm (SBR1)]
\item 
the set
$\WW \coloneqq \{\overline{W^u(p)} : p \in \Per(X,f)\}$
is finite;
\item 
the elements of $\WW$ are non-empty clopen subsets of $X$ and form a finite partition of $X$;
\item 
one has $f(W) \in \WW$ for every $W \in \WW$;
\item
the map $\WW \to \WW$, $W \mapsto  f(W)$, 
is a permutation of $\WW$;
\item
if $W \in \WW$ and $k_W$ is the least positive integer such that $f^{k_W}(W)  = W$, then
the dynamical system $(W,f^{k_W})$ is a  topologically mixing Smale space; 
\item 
for every $W \in \WW$, the set
$\Omega_W \coloneqq \bigcup_{0 \leq i \leq k_W - 1} f^i(W)$ is a non-empty $f$-invariant clopen subset of $X$
and the dynamical system  $(\Omega_W,f)$ is an irreducible Smale space;
\item
if $Y$ is a non-empty $f$-invariant clopen subset of $X$ such that $(Y,f)$ is irreducible, then there exists $W \in \WW$ such that $Y = \Omega_W$.
\end{enumerate}   
\end{theorem}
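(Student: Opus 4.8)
The plan is to follow the classical Smale--Bowen--Ruelle argument, the main tools being the Anosov closing lemma (Theorem~\ref{t:nw-closure-per}), the local product structure encoded by the bracket $[\cdot,\cdot]$, and expansivity together with the pseudo-orbit tracing property. First I would record the consequence $\overline{\Per(X,f)}=\NW(X,f)=X$ of the closing lemma, so that periodic points are dense; this is what allows one to reconstruct the global dynamics from finitely many local pieces. I would also fix Smale space data $(d,\varepsilon,\lambda,[\cdot,\cdot])$ and recall that for $d(x,y)\le\varepsilon$ the bracket $[x,y]$ is the unique point of $W^s_{\mathrm{loc}}(x)\cap W^u_{\mathrm{loc}}(y)$, so that a neighbourhood of any point is homeomorphic to a product of a local stable and a local unstable set, with $f$ uniformly contracting along stable sets and $f^{-1}$ uniformly contracting along unstable sets.

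The structural heart of the proof is the dichotomy that for periodic points $p,q$ the closures $\overline{W^u(p)}$ and $\overline{W^u(q)}$ are either equal or disjoint. I would prove this by showing that the relation ``$W^u(p)\cap W^s(q)\ne\varnothing$ and $W^s(p)\cap W^u(q)\ne\varnothing$'' is an equivalence relation on $\Per(X,f)$ whose classes have equal unstable closures. Reflexivity and symmetry are immediate; \textbf{the transitivity is the main obstacle}. To establish it I would take heteroclinic points realizing $p\sim q$ and $q\sim r$, iterate them toward the periodic orbit of $q$ so that they come $\varepsilon$-close to a common point of that orbit, and splice them with the bracket to produce a point in $W^u(p)\cap W^s(r)$ (and symmetrically one in $W^s(p)\cap W^u(r)$); matching the orbit phases so that the bracket is defined and lands on the correct unstable class is the delicate point. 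The same bracketing, now iterated to exploit uniform expansion along unstable sets, shows that related periodic points satisfy $\overline{W^u(p)}=\overline{W^u(q)}$ while unrelated ones have disjoint closures; an analogous local-product argument at an arbitrary point of $X$, combined again with the closing lemma, shows that each $\overline{W^u(p)}$ is open and that the pieces cover $X$.

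Granting this, (SBR1) and (SBR2) follow formally: the distinct sets $\overline{W^u(p)}$ are closed (being closures), open, pairwise disjoint, nonempty, and cover $X$, so they form a partition of the compact space $X$ into clopen sets, which must therefore be finite. For (SBR3)--(SBR4) I would use $f(\overline{W^u(p)})=\overline{W^u(f(p))}$ together with $f(p)\in\Per(X,f)$ to see that $f$ maps $\WW$ into $\WW$; as $f$ is a homeomorphism and $\WW$ is finite, $W\mapsto f(W)$ is a permutation.

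It remains to analyse a single piece $W=\overline{W^u(p)}$ and its return time $k_W$. I would first note that $W$ is a clopen $f^{k_W}$-invariant set, so $(W,f^{k_W})$ inherits expansivity and the tracing property and is again a Smale space, and likewise $(\Omega_W,f)$. For the topological mixing in (SBR5) I would use that $W^u(p)$ is dense in $W$ by definition, that $f^{k_W}$ uniformly expands local unstable sets, and that by compactness the forward iterates of a single local unstable set become and remain $\varepsilon$-dense in $W$; this yields $f^{nk_W}(U)\cap V\ne\varnothing$ for all large $n$ and all nonempty open $U,V\subseteq W$. Statement (SBR6) then follows because $f$ cyclically permutes the $k_W$ clopen sets $f^i(W)$ with topologically mixing first-return map, which forces $(\Omega_W,f)$ to be irreducible. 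Finally, for the uniqueness statement (SBR7) I would use the elementary fact that an irreducible system admits no proper nonempty clopen invariant subset (since $f(C)=C$ makes both $C$ and its complement fully invariant, a dense forward orbit could meet only one of them): given an irreducible invariant clopen set $Y$, it meets some $\Omega_W$, and applying this fact to the clopen invariant set $Y\cap\Omega_W$ inside both $\Omega_W$ and $Y$ forces $Y=\Omega_W$.
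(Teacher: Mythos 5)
A preliminary remark on the target: the paper never proves Theorem~\ref{t:smale-decomposition}; it quotes it as a known result of Smale--Bowen--Ruelle with references to Bowen, Ruelle and Putnam. So your attempt can only be measured against the classical argument in those sources, and in outline you do follow it: the symmetric relation on $\Per(X,f)$ defined by two-sided heteroclinic intersections, bracket-and-phase-matching splicing, clopen pieces permuted by $f$, mixing of the return map via expansion of unstable sets, and the purely formal deductions of (SBR3), (SBR4), (SBR6) and (SBR7) (your uniqueness argument for (SBR7), via the fact that an irreducible system has no proper non-empty invariant clopen subset, is exactly right). The transitivity step you flag is also handled correctly by your splicing sketch.

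The genuine gap is in the sentence where you claim that ``the same bracketing \dots shows that \dots unrelated ones have disjoint closures'' and in the ``analogous local-product argument \dots combined again with the closing lemma'' for openness and covering --- that is, precisely (SBR1)--(SBR2), which is where the non-wandering hypothesis does its real work. The bracket only ever produces \emph{one-directional} connections: from $u\in W^u(p)$, $v\in W^u(q)$ with $d(u,v)\le\varepsilon$ you get points of $W^s(u)\cap W^u(q)$ and $W^s(v)\cap W^u(p)$ (and $W^s(u)$ has nothing to do with $W^s(p)$), and from a periodic $s$ near a point of $\overline{W^u(p)}$ you get $W^s(s)\cap W^u(p)\neq\varnothing$; nothing in this toolkit yields the reverse connection $W^u(s)\cap W^s(p)\neq\varnothing$ needed to conclude $s\sim p$, hence the dichotomy, hence finiteness, openness and covering. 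The asymmetry is real: in the wandering subshift of finite type of Remark~\ref{r:non-surjunctive-sft}, $W^u(0^\infty)\cap W^s(2^\infty)\neq\varnothing$ while $W^s(0^\infty)\cap W^u(2^\infty)=\varnothing$, and correspondingly $\overline{W^u(2^\infty)}=\{2^\infty\}$ is a non-open set contained in $\overline{W^u(0^\infty)}$, so the disjoint-or-equal dichotomy fails; every bracket manipulation you describe is available there, which shows those steps cannot follow from bracketing alone. What is missing is the mechanism by which recurrence \emph{reverses} a connection: given periodic $p,q$ and $z\in W^u(p)\cap W^s(q)$, choose $K$ (a multiple of both periods) with $f^{-K}(z)$ near $p$ and $f^{K}(z)$ near $q$, and, by the closing lemma (Theorem~\ref{t:nw-closure-per}), a periodic point $s$ so close to $z$ that its orbit shadows that of $z$ for $|j|\le K$. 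If the period of $s$ exceeds $2K$, then the $s$-orbit passes near $q$ at time $K$ and near $p$ at the \emph{later} time $\pi_s-K$ --- the periodic orbit wraps around --- and bracketing at these two times gives connections $q\to s$ and $s\to p$, whence $W^u(q)\cap W^s(p)\neq\varnothing$ by your own splicing; if instead all periodic points near $z$ have bounded period, a limit argument makes $z$ itself periodic and forces $p=q$. (Bowen and Putnam phrase the same reversal by shadowing a pseudo-orbit built from a returning orbit segment at $z$, using the pseudo-orbit tracing property.) This symmetry lemma, absent from your proposal, is the crux --- not transitivity --- and without it (SBR1)--(SBR2), on which everything else in your write-up rests, are unsupported.
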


\begin{lemma}
\label{l:tau-W-in-WW}
Let $(X,f)$ be a non-wandering Smale space and let $\WW$ be as in Theorem~\ref{t:smale-decomposition}.
Suppose that $\tau \in \End(X,f)$ and let $W \in \WW$.
Then there exists $W' \in \WW$ such that
$\tau(W) \subset W'$.
\end{lemma}

\begin{proof}
Let $p \in \Per(X,f)$ such that $W = \overline{W^u(p)}$.
We have $\tau(W^u(p)) \subset W^u(\tau(p))$
by Proposition~\ref{p:prop-morphism-stable}.(ii).
Using  the continuity of $\tau$,
we deduce that
$\tau(W) = \tau(\overline{W^u(p)}) \subset \overline{\tau(W^u(p))} \subset \overline{W^u(\tau(p))}$.
As $\tau(p) \in \Per(X,f)$ by Proposition~\ref{p:prop-morphism}.(ii), we deduce that we can take $W' \coloneqq \overline{W^u(\tau(p))} \in \WW$.  
\end{proof}

\subsection{The Li-Doucha Garden of Eden theorem}

The following result  was proved by Li~\cite{li-goe-spec-2019} and Doucha~\cite{doucha-goe-2023}, using properties of topological entropy,  
in the more general setting of  expansive continuous  actions with the weak specification property and the pseudo-orbit tracing property of countable amenable groups.
Actually,  the Myhill property for such systems was first obtained by Li~\cite[Theorem~1.1]{li-goe-spec-2019}
 and, a few years after, Doucha~\cite[Theorem~A]{doucha-goe-2023} established the Moore property.
 Note that the hypothesis that the system satisfies the pseudo-orbit tracing property is not needed for the Myhill part.   

\begin{theorem}
\label{t:li-doucha}
(Li~\cite{li-goe-spec-2019}, Doucha~\cite{doucha-goe-2023})
Let $(X,f)$ be a dynamical system consisting of a compact metrizable space $X$ and a homeomorphism $f \colon X \to X$.
Suppose that $(X,f)$ is expansive and has both the weak specification property and the pseudo-orbit tracing property.
Then $(X,f)$ satisfies the Garden of Eden theorem.
\end{theorem}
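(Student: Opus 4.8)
The plan is to prove both halves of the Garden of Eden theorem through the topological entropy $\htop(X,f)$, which is finite because $f$ is expansive. Before the entropy estimates I would record a locality principle that follows from expansivity alone: a Curtis--Hedlund--Lyndon type statement saying that every $\tau \in \End(X,f)$ is \emph{uniformly local}, i.e.\ there is an integer $r \geq 0$ such that $\tau(x)$ is determined, up to the expansivity constant, by the orbit segment $(f^{-r}(x),\dots,f^{r}(x))$; quantitatively, if $x$ and $y$ stay $\delta$-close along the window $[-n-r,n+r]$ then $\tau(x)$ and $\tau(y)$ stay within the expansivity constant along $[-n,n]$. This makes it legitimate to compute $\htop$ from $(n,\varepsilon)$-separated sets and to transfer separation across $\tau$ at the cost of a bounded shift $r$ of the window. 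Note that $\tau(X)$ is always a closed invariant subset on which $\tau$ restricts to a factor map, so $\htop(\tau(X),f) \leq \htop(X,f)$ automatically.

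The argument then rests on three quantitative principles. \textbf{Principle A (entropy minimality).} Using the weak specification property I would show that every proper closed invariant subset $Y \subsetneq X$ satisfies $\htop(Y,f) < \htop(X,f)$: choose $x_0 \notin Y$ and an $\varepsilon$-ball around $x_0$ disjoint from $Y$, then use weak specification, with gaps of the fixed length $N$, to force orbits to pass $\varepsilon$-close to $x_0$ at a positive density of times while prescribing the behaviour independently elsewhere; counting yields a definite exponential surplus of $(n,\varepsilon)$-separated points in $X$ over those realizable in $Y$. Equivalently, weak specification supplies a unique measure of maximal entropy of full support, from which entropy minimality is immediate. \textbf{Principle B (preservation under pre-injective maps).} If $\tau$ is pre-injective then $\htop(\tau(X),f) = \htop(X,f)$: only $\geq$ requires work, and for it I would take a maximal $(n+2r,\varepsilon)$-separated set in $X$, observe that pre-injectivity forbids collisions among its homoclinic pairs, and use the locality principle together with specification (to keep prescribed windows independent) to check that its image, after discarding a subexponential number of collisions, is $(n,\varepsilon')$-separated in $\tau(X)$. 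Crucially, Principle B uses only expansivity and specification, not the pseudo-orbit tracing property.

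\textbf{Principle C (strict drop).} Suppose $\tau$ is \emph{not} pre-injective, so there are distinct homoclinic points $x \sim_h y$ with $\tau(x) = \tau(y)$; by expansivity they differ appreciably only on a bounded time window $W$. Here I would invoke the pseudo-orbit tracing property to graft the discrepancy between $x$ and $y$ into a grid of roughly $n/|W|$ disjoint translates along an orbit, gluing the grafted segments into a genuine traced orbit; this manufactures $2^{cn}$ configurations in $X$ that $\tau$ collapses in pairs, and a counting argument gives $\htop(\tau(X),f) < \htop(X,f)$. With the three principles in place the theorem follows formally. For the Myhill property, let $\tau$ be pre-injective; Principle B gives $\htop(\tau(X),f) = \htop(X,f)$, so by Principle A the subset $\tau(X)$ cannot be proper, whence $\tau(X)=X$ and $\tau$ is surjective. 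For the Moore property, let $\tau$ be surjective, so $\tau(X)=X$ and $\htop(\tau(X),f)=\htop(X,f)$; then $\tau$ must be pre-injective, since otherwise Principle C would force a strict drop.

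I expect the main obstacle to be Principle C, the strict entropy drop produced by a single homoclinic collapse. The delicate point is that one collapsing pair $x \sim_h y$ must be amplified into an exponentially large family of collisions, and the amplification has to be carried out \emph{intrinsically} on $X$ using only the pseudo-orbit tracing property to weld the grafted segments into actual orbits while keeping the expansive separation under control; making the grafting simultaneously legal (a genuinely traced pseudo-orbit), mutually independent across the grid, and entropy-efficient is the crux. By contrast, Principle A is comparatively standard once the full-support measure of maximal entropy furnished by weak specification is available, and Principle B is a bounded-window counting argument driven by the locality principle.
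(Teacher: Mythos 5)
The paper does not actually prove this theorem: it is quoted from Li~\cite{li-goe-spec-2019} (Myhill half) and Doucha~\cite{doucha-goe-2023} (Moore half), with the only indication of method being that their proofs use topological entropy. Your proposal reconstructs precisely that strategy --- entropy minimality from weak specification (your Principle A) plus entropy preservation under pre-injective endomorphisms (Principle B) for the Myhill property, and a pseudo-orbit-tracing-based entropy drop for non-pre-injective endomorphisms (Principle C) for the Moore property, with the tracing property needed only in the latter, exactly as the paper itself remarks --- so your approach is essentially the same as that of the cited proofs.
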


\section{Surjunctivity of non-wandering Smale spaces}
\label{sec:surjunctivity}

In this section, we prove the first part of Theorem~\ref{t:main-surjunctive}.
\par
The following auxiliary result  is a particular case of Proposition~5.1 in~\cite{csc-anosov-2016}.

\begin{lemma}
\label{l:dense-per-is-surj}
Let $X$ be a compact metrizable space and let $f \colon X \to X$ be a homeomorphism.
Suppose that the dynamical system $(X,f)$ is expansive and that the set $\Per(X,f)$ is dense in $X$.
Then $(X,f)$ is surjunctive.
\end{lemma}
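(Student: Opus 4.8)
The plan is to use the periodic points as a dense, finite-layer ``skeleton'' of $X$ that every endomorphism must respect. Let $\tau$ be an injective endomorphism of $(X,f)$; the goal is to show $\tau(X) = X$.

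First I would fix an integer $n \geq 1$ and look at the set $\Per_n(X,f)$. Since $(X,f)$ is expansive, Lemma~\ref{l:exp-finitely-p-per} guarantees that $\Per_n(X,f)$ is \emph{finite}. On the other hand, Proposition~\ref{p:prop-morphism}.(i) gives $\tau(\Per_n(X,f)) \subset \Per_n(X,f)$, so $\tau$ restricts to a self-map of this finite set. As $\tau$ is injective, this restriction is an injective self-map of a finite set, hence a bijection; in particular $\tau(\Per_n(X,f)) = \Per_n(X,f)$. Taking the union over all $n \geq 1$ then yields $\tau(\Per(X,f)) = \Per(X,f)$, and in particular $\Per(X,f) \subset \tau(X)$.

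To upgrade this inclusion to surjectivity I would invoke compactness: $\tau(X)$ is the continuous image of a compact space, hence compact and therefore closed in the metrizable space $X$. Consequently $\overline{\Per(X,f)} \subset \tau(X)$. Since $\Per(X,f)$ is dense in $X$ by hypothesis, $\overline{\Per(X,f)} = X$, and we conclude $\tau(X) = X$, i.e., $\tau$ is surjective.

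I do not expect a serious obstacle here: the whole argument rests on the finiteness of each $\Per_n(X,f)$, which is exactly what converts the global injectivity of $\tau$ into surjectivity onto the finite $f$-invariant layers $\Per_n(X,f)$, and this is precisely the point at which expansivity is used. Everything else is routine: invariance of periodic points under morphisms, closedness of continuous images of compacta, and the density hypothesis. The only thing to state carefully is that injectivity of $\tau$ on all of $X$ trivially forces injectivity of its restriction to each $\Per_n(X,f)$, after which the finite-set bijection principle applies directly.
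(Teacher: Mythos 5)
Your proposal is correct and follows essentially the same argument as the paper: finiteness of each $\Per_n(X,f)$ via expansivity, invariance under $\tau$, the finite-set bijection principle to get $\tau(\Per(X,f)) = \Per(X,f)$, and then closedness of $\tau(X)$ plus density of the periodic points to conclude surjectivity. No gaps.
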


\begin{proof}
Let $\tau \colon X \to X$ be an injective endomorphism of $(X,f)$.
Let $n \geq 1$ be an integer.
By Lemma~\ref{l:exp-finitely-p-per}, the set $\Per_n(X,f)$ is finite.
As  $\tau(\Per_n(X,f)) \subset \Per_n(X,f)$ by Proposition~\ref{p:prop-morphism}.(a),
injectivity of $\tau$ implies that $\tau(\Per_n(X,f)) = \Per_n(X,f)$.
Since $\Per(X,f) = \bigcup_{n \geq 1} \Per_n(X,f)$, it follows that $\tau(\Per(X,f)) = \Per(X,f)$.
The set  $\tau(X)$ is closed in $X$ by continuity of $\tau$ and compactness of $X$.
Since $\Per(X,f)$ is dense in $X$ by our hypotheses, we conclude that $\tau(X) = X$.
This shows that $\tau$ is surjective and hence that $(X,f)$ is surjunctive.
  \end{proof}

\begin{theorem}
\label{t:main-surjunctive-1}
Every non-wandering Smale space is surjunctive.
\end{theorem}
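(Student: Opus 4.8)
The plan is to deduce Theorem~\ref{t:main-surjunctive-1} from Lemma~\ref{l:dense-per-is-surj}. Let $(X,f)$ be a non-wandering Smale space. By definition, a Smale space is expansive, so the expansivity hypothesis of Lemma~\ref{l:dense-per-is-surj} is automatically satisfied. It therefore remains only to verify that the set of periodic points $\Per(X,f)$ is dense in $X$.

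The density of $\Per(X,f)$ is exactly what Anosov's closing lemma (Theorem~\ref{t:nw-closure-per}) delivers, \emph{once} we use the non-wandering hypothesis. Indeed, Theorem~\ref{t:nw-closure-per} asserts that $\overline{\Per(X,f)} = \NW(X,f)$ for any Smale space. Since $(X,f)$ is assumed non-wandering, we have $\NW(X,f) = X$ by definition, and hence $\overline{\Per(X,f)} = X$; that is, $\Per(X,f)$ is dense in $X$. This is the single substantive step, and it is the place where both structural assumptions on the system are consumed: the Smale-space structure (through the closing lemma) and the non-wandering hypothesis (to turn $\NW(X,f)$ into all of $X$).

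With expansivity and density of periodic points both in hand, I would simply invoke Lemma~\ref{l:dense-per-is-surj} to conclude that $(X,f)$ is surjunctive, completing the proof.

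There is no genuine obstacle here: the argument is a direct assembly of two previously established results, namely the closing lemma and the auxiliary surjunctivity lemma. The only thing worth flagging is that the proof is entirely non-constructive in character---it proceeds by showing that an injective endomorphism must already be surjective on the dense invariant set $\Per(X,f)$ (using finiteness of each $\Per_n(X,f)$ from expansivity, so that an injective self-map of the finite set $\Per_n(X,f)$ is forced to be a bijection) and then passing to the closure. All of this machinery is packaged inside Lemma~\ref{l:dense-per-is-surj}, so the present proof reduces to checking its two hypotheses.
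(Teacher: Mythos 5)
Your proof is correct and is exactly the paper's own argument: expansivity of Smale spaces plus Anosov's closing lemma (Theorem~\ref{t:nw-closure-per}) with the non-wandering hypothesis gives density of $\Per(X,f)$ in $X$, and Lemma~\ref{l:dense-per-is-surj} then yields surjunctivity. The paper states this in one line; you have simply spelled out the same assembly in more detail.
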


\begin{proof}
As every Smale space is expansive, this immediately follows from
Theorem~\ref{t:nw-closure-per} and Lemma~\ref{l:dense-per-is-surj}.
\end{proof}
 
\begin{remark}
\label{r:non-surjunctive-sft}
There exist non-surjunctive Smale spaces.
In fact, there are   subshifts of finite type that are not surjunctive.
The following example was given by Weiss~\cite[p.~358]{weiss-sgds} (cf.~\cite[Exercise~3.38]{csc-ecag}).
Consider the $1$-step subshift of finite type $\Sigma \subset \{0,1,2\}^{\Z}$ admitting the set
$\{00,11,22,01,12\}$ as a defining set of admissible words.
Observe that the word $12$ can appear at most once in a configuration of $\Sigma$.
It is easy to check that  the map $\tau \colon \Sigma \to \Sigma$, $x \mapsto \tau(x)$,  which replaces the word   $12$ (if it appears in $x$) by $11$,
is an  injective endomorphism of $\Sigma$ which is not surjective
(note that $012 \in L(\Sigma) \setminus L(\tau(\Sigma))$).
Therefore $(\Sigma,\sigma)$ is not surjunctive.  
\end{remark}

\begin{remark}
A surjective endomorphism of a Smale space $(X,f)$ may fail to be injective even if $(X,f)$ is topologically mixing.
For example, the endomorphism $\tau$ of the full shift $(\{0,1\}^{\Z},\sigma)$, defined by
$(\tau(u))_n \coloneqq u_n + u_{n +1} \mod 2$,
is easily shown to be surjective but not injective (cf.~\cite[Example~3.3.8]{csc-cag2}).
Another example is provided by \emph{Arnold's cat}, i.e., the Anosov diffeomorphism $f$ of the $2$-torus $X \coloneqq \R^2/\Z^2$
defined by $f(x_1,x_2) \coloneqq (2x_1 + x_2,x_1 + x_2)$,  and the endomorphism $\tau$ of $(X,f)$ given by $\tau(x_1,x_2) \coloneqq (2 x_1,2 x_2)$  (cf.~\cite[Section~4]{csc-anosov-2016}).   
\end{remark}

\begin{corollary}
\label{c:injective-endo-restricted-nw}
Let $(X,f)$ be a Smale space and let $\tau \in \End(X,f)$ be an injective endomorphism of $(X,f)$.
Then $\tau$ induces by restriction a dynamical system automorphism of $(\NW(X,f),f)$, i.e.,
an $f$-equivariant homemorphism of $\NW(X,f)$ onto itself.
\end{corollary}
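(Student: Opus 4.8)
The plan is to reduce the statement to the surjunctivity criterion already available for expansive systems with dense periodic points, namely Lemma~\ref{l:dense-per-is-surj}, applied to the subsystem carried by the non-wandering set. First I would check that $\tau$ restricts to a well-defined endomorphism of $(\NW(X,f),f)$. Since $\tau$ is a morphism $(X,f)\to(X,f)$, Proposition~\ref{p:prop-morphism}.(iii) gives $\tau(\NW(X,f))\subset\NW(X,f)$, so, writing $Y\coloneqq\NW(X,f)$ and $g\coloneqq f\vert_Y$, the restriction $\tau\vert_Y\colon Y\to Y$ is a continuous self-map commuting with $g$; that is, $\tau\vert_Y\in\End(Y,g)$. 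Moreover $\tau\vert_Y$ is injective because $\tau$ is.

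Next I would verify that the hypotheses of Lemma~\ref{l:dense-per-is-surj} hold for $(Y,g)$. Expansivity is inherited: every subsystem of an expansive system is expansive, and $(X,f)$ is expansive since it is a Smale space. For density of periodic points, I would note that $\Per(Y,g)=\Per(X,f)$, because every periodic point of $f$ lies in $\NW(X,f)=Y$ and periodicity is unaffected by restricting the map. Anosov's closing lemma for Smale spaces (Theorem~\ref{t:nw-closure-per}) gives $\overline{\Per(X,f)}=\NW(X,f)=Y$, so $\Per(Y,g)$ is dense in $Y$. Hence Lemma~\ref{l:dense-per-is-surj} applies and $(Y,g)$ is surjunctive.

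The conclusion then follows quickly: the injective endomorphism $\tau\vert_Y$ of the surjunctive system $(Y,g)$ must be surjective, hence bijective. A continuous bijection of the compact metrizable space $Y$ onto itself is automatically a homeomorphism, and since $\tau\vert_Y$ commutes with $g$, it is an $f$-equivariant self-homeomorphism of $\NW(X,f)$, i.e.\ an automorphism of the dynamical system $(\NW(X,f),f)$, as required.

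The only point requiring care --- and the step I would flag as the main obstacle --- is resisting the temptation to argue that $(\NW(X,f),f)$ is itself a Smale space, which would force one to check that the pseudo-orbit tracing property survives restriction to the non-wandering set. This is unnecessary: Lemma~\ref{l:dense-per-is-surj} requires only expansivity together with dense periodic points, and both transfer immediately to the subsystem, the latter precisely via the Anosov closing lemma. Keeping the argument at the level of these two properties, rather than the full Smale-space structure, is what makes the proof short.
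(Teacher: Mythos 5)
Your proof is correct, but it follows a genuinely different route from the paper's. The paper's own proof starts the same way, using Proposition~\ref{p:prop-morphism}.(iii) to get $\tau(\NW(X,f))\subset\NW(X,f)$, but it then invokes a non-trivial external fact, cited from Ruelle, that $(\NW(X,f),f)$ is itself a Smale space --- i.e., that the pseudo-orbit tracing property survives restriction to the non-wandering set --- and concludes by applying Theorem~\ref{t:main-surjunctive-1} (every non-wandering Smale space is surjunctive). You instead unwind Theorem~\ref{t:main-surjunctive-1} into its two ingredients and verify them directly on the subsystem: expansivity passes to any subsystem, and density of $\Per(\NW(X,f),f)=\Per(X,f)$ in $\NW(X,f)$ is exactly Anosov's closing lemma (Theorem~\ref{t:nw-closure-per}) applied to the ambient system, so Lemma~\ref{l:dense-per-is-surj} gives surjunctivity of the restricted system. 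Your route buys two things: it never needs the POTP-inheritance fact from Ruelle, and it sidesteps a step left implicit in the paper, namely that the restricted system $(\NW(X,f),f)$ is itself \emph{non-wandering} (which Theorem~\ref{t:main-surjunctive-1} formally requires, and which again reduces to the closing lemma since periodic points of the restriction are non-wandering and dense). The paper's route buys brevity and records the structurally interesting fact that the non-wandering set of a Smale space is again a Smale space. Your closing step --- a continuous bijection of a compact metrizable space onto itself is a homeomorphism, and equivariance is inherited --- is correct and matches the paper's conclusion.
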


\begin{proof}
We first observe that $\tau(\NW(X,f)) \subset \NW(X,f)$ by Proposition~\ref{p:prop-morphism}.(iii)
so that $\tau$ induces by restriction an endomorphism of $(\NW(X,f),f)$.
As $(\NW(X,f),f)$ is itself a Smale space (see \cite[p.~124]{ruelle-thermodynamic-2nd}) and $\tau$ is injective,
we deduce from Theorem~\ref{t:main-surjunctive-1} that $\tau$ induces by restriction a homeomorphism of $\NW(X,f)$.  
\end{proof}

\section{The Garden of Eden theorem for irreducible Smale spaces}
\label{sec:goe-irred-smale}

The goal of this section is to establish Theorem~\ref{t:goe-for-irred-smale}.
We shall use the following auxiliary results.

\begin{lemma}
\label{l:stably-equiv-same-compo}
Let $(X,f)$ be a non-wandering Smale space and let $\WW$ be as in Theorem~\ref{t:smale-decomposition}.
Suppose that $x,y \in X$ are stably equivalent (resp.~unstably equivalent, resp.~homoclinic)  and let $W$ be the unique element of $\WW$ such that $x \in W$.
Then one has $y \in W$.
\end{lemma}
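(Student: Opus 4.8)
The plan is to prove the statement separately for the stable and for the unstable equivalence relations; the homoclinic case is then immediate, since two homoclinic points are in particular stably equivalent. The two features of Theorem~\ref{t:smale-decomposition} that I would exploit are that $\WW$ is a \emph{finite} partition of $X$ into \emph{clopen} pieces (SBR2) and that $f$ permutes $\WW$ (SBR3)--(SBR4).

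First I would fix a separation constant. As the finitely many elements of $\WW$ are pairwise disjoint nonempty compact subsets of $X$, the quantity $\rho \coloneqq \min\{d(z,z') : W_1, W_2 \in \WW,\ W_1 \neq W_2,\ z \in W_1,\ z' \in W_2\}$ is strictly positive, so that any two points lying in distinct elements of $\WW$ are at distance at least $\rho$. I would also write $\pi \colon \WW \to \WW$ for the permutation $W \mapsto f(W)$ furnished by (SBR3) and (SBR4); by induction one has $f^n(W) = \pi^n(W) \in \WW$ for every $W \in \WW$ and every $n \in \Z$, where for negative $n$ one uses that $\pi$ is a bijection so that $f^{-1}(W) = \pi^{-1}(W) \in \WW$ as well.

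Now suppose $x \sim_s y$, let $W \in \WW$ be the piece containing $x$, and let $W' \in \WW$ be the piece containing $y$; the goal is to show $W = W'$. If not, then, since $\pi^n$ is a bijection of $\WW$, one has $\pi^n(W) \neq \pi^n(W')$ for every $n \in \Z$. As $f^n(x) \in \pi^n(W)$ and $f^n(y) \in \pi^n(W')$ then lie in distinct elements of $\WW$, the separation estimate yields $d(f^n(x),f^n(y)) \geq \rho > 0$ for all $n \in \Z$, contradicting the defining condition $d(f^n(x),f^n(y)) \to 0$ as $n \to +\infty$ of stable equivalence. Hence $W = W'$, i.e., $y \in W$. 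The unstable case is settled by the identical argument, this time playing the uniform lower bound $\rho$ (valid for all $n \in \Z$) against the condition $d(f^n(x),f^n(y)) \to 0$ as $n \to -\infty$.

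I do not expect a serious obstacle here: the entire argument reduces to the observation that stable (resp.\ unstable) equivalence forces the two orbits to approach each other in forward (resp.\ backward) time, while the permutation property of $f$ on the clopen partition $\WW$ guarantees that two points starting in distinct pieces remain at distance at least $\rho$ for all times. The only points requiring a little care are the extraction of the positive constant $\rho$ from compactness and finiteness of $\WW$, and the verification that $f^n$ carries elements of $\WW$ to elements of $\WW$ for \emph{all} integers $n$ (not just $n \geq 0$), which is exactly what the bijectivity in (SBR4) provides. Finally, the homoclinic case follows from the stable case, since $x \sim_h y$ implies $x \sim_s y$.
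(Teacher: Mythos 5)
Your proof is correct; it differs from the paper's in mechanism, although both rest on the same features of the spectral decomposition (clopen pieces permuted by $f$). The paper argues directly and \emph{locally} at $W$: since $W$ is compact and open, there exists $\varepsilon > 0$ such that every point at distance less than $\varepsilon$ from a point of $W$ lies in $W$; by Theorem~\ref{t:smale-decomposition}.(SBR5) one has $f^{k_W}(W) = W$, so $f^{k_W n}(x) \in W$ for all $n \geq 1$, and choosing $n$ with $d(f^{k_W n}(x),f^{k_W n}(y)) < \varepsilon$ gives $f^{k_W n}(y) \in W$, hence $y \in f^{-k_W n}(W) = W$. You instead argue \emph{globally} and by contradiction: finiteness of $\WW$ yields a uniform separation constant $\rho > 0$ between distinct pieces, and the permutation property (SBR3)--(SBR4) keeps the pieces containing $f^n(x)$ and $f^n(y)$ distinct for every $n \in \Z$, so $d(f^n(x),f^n(y)) \geq \rho$ for all $n$, contradicting stable (resp.\ unstable) equivalence. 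What each buys: your argument treats the stable and unstable cases with literally the same estimate and uses only (SBR2)--(SBR4), whereas the paper's argument never needs the mutual separation of all the pieces, only the $\varepsilon$-collar of the single piece $W$ and its return time, so it would survive even if the clopen partition were infinite (your constant $\rho$ genuinely requires finiteness of $\WW$). One cosmetic point: when $\WW$ has a single element, the minimum defining $\rho$ runs over an empty set, but in that case the conclusion is trivially true, so nothing is lost; and both proofs settle the homoclinic case the same way, by reduction to the stable one.
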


\begin{proof}
Suppose that $x, y \in X$ are stably equivalent, i.e., $d(f^n(x),f^n(y)) \to 0$ as $n \to \infty$.
Let $k = k_W \geq 1$ be the integer as in Theorem~\ref{t:smale-decomposition}.(SBR5), so that $f^k(W) = W$.
Since $W$ is a compact open subset of $X$, there exists $\varepsilon > 0$ such that
$B_X(w,\varepsilon) \coloneqq \{w' \in X : d(w,w') < \varepsilon\} \subset W$ for every $w \in W$.
As $x$ and $y$ are stably equivalent, there exists $n \geq 1$ such that $d(f^{kn}(x),f^{kn}(y)) <  \varepsilon$.
Since $W$ is $f^{k n}$-invariant, 
this implies $f^{kn}(y) \in B_X(f^{kn}(x),\varepsilon) \subset W$ and hence $y \in W$.The proof in the case when $x$ and $y$ are unstably equivalent is similar.
As homoclinic points are stably  equivalent, the case
when $x$ and $y$ are homoclinic follows as well. 
 \end{proof}

\begin{lemma}
\label{l:goe-top-mixing-smale}
Every topologically mixing Smale space satisfies the Garden of Eden theorem.
\end{lemma}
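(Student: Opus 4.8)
The plan is to deduce this from the Li--Doucha Garden of Eden theorem (Theorem~\ref{t:li-doucha}). That result says that any dynamical system which is expansive and has both the weak specification property and the pseudo-orbit tracing property satisfies the Garden of Eden theorem. So it suffices to verify that a topologically mixing Smale space $(X,f)$ meets all three hypotheses. Two of them are immediate: by definition every Smale space is expansive and has the pseudo-orbit tracing property. The only non-trivial point is to produce the weak specification property from the topological mixing hypothesis.

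\emph{First} I would address the weak specification property. For subshifts of finite type this is exactly the content of Proposition~\ref{p:tmftwsp} (topologically mixing SFTs are strongly irreducible, hence have weak specification). The idea is to transfer this to the Smale space via the Bowen--Ruelle coding: by Theorem~\ref{t:coding-smale}, a topologically mixing Smale space $(X,f)$ is a \emph{factor} of a topologically mixing subshift of finite type $(\Sigma,\sigma)$. By Proposition~\ref{p:tmftwsp}, $\Sigma$ has the weak specification property, and by Proposition~\ref{p:prop-morphism}.(iv) the weak specification property passes to factors. Since $X = \pi(\Sigma)$ for the factor map $\pi$, we conclude that $(X,f)$ has the weak specification property.

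\emph{Then} the three hypotheses of Theorem~\ref{t:li-doucha} are all verified for $(X,f)$, and the conclusion that $(X,f)$ satisfies the Garden of Eden theorem follows directly. In outline: (i) expansivity and pseudo-orbit tracing hold because $(X,f)$ is a Smale space; (ii) weak specification holds by the coding argument above; (iii) apply Theorem~\ref{t:li-doucha}.

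The only place requiring genuine care is the transfer of the weak specification property through the coding map, and that has essentially been prepared in advance: the relevant stability under factors is precisely the content of Proposition~\ref{p:prop-morphism}.(iv), whose proof uses uniform continuity of $\pi$ to convert an $\varepsilon$-tracing statement upstairs into an $\varepsilon$-tracing statement downstairs after shrinking to $\delta$. I expect no real obstacle here, since all the work has been isolated into the cited propositions; the lemma is essentially an assembly of Theorems~\ref{t:coding-smale} and~\ref{t:li-doucha} together with Propositions~\ref{p:tmftwsp} and~\ref{p:prop-morphism}.(iv).
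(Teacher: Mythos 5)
Your proposal is correct and follows exactly the paper's own proof: both obtain the weak specification property for $(X,f)$ by combining Theorem~\ref{t:coding-smale}, Proposition~\ref{p:tmftwsp}, and Proposition~\ref{p:prop-morphism}.(iv), and then conclude via Theorem~\ref{t:li-doucha} using expansivity and the pseudo-orbit tracing property of Smale spaces. There is no gap and no difference in approach.
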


\begin{proof}
Let $(X,f)$ be a topologically mixing Smale space.
By Theorem~\ref{t:coding-smale}, there exist a finite set $A$ and a topologically mixing subshift of finite type $\Sigma \subset A^{\Z}$ such that
$(X,f)$ is a factor of $(\Sigma,\sigma)$.
As $(\Sigma,\sigma)$ has the weak specification property by Proposition~\ref{p:tmftwsp},
it follows from  Proposition~\ref{p:prop-morphism}.(iv)
 that $(X,f)$ has also the weak specification property (cf.~\cite[Section~7.14]{ruelle-thermodynamic-2nd}).
   Since every Smale space is expansive and has the pseudo-orbit tracing property,
we deduce from Theorem~\ref{t:li-doucha}  that $(X,f)$ satisfies the Garden of Eden theorem.
\end{proof}

\begin{proof}[Proof of Theorem~\ref{t:goe-for-irred-smale}]
Let $(X,f)$ be an irreducible Smale space.
By the Smale-Bowen-Ruelle decomposition theorem (Theorem~\ref{t:smale-decomposition}), there exist $k \in \N$
and a finite partition $X = \bigsqcup_{i \in \Z/k\Z} W_i$ into non-empty clopen subsets
$W_i \subset X$, $i \in \Z/k\Z$,  such that $f(W_i) = W_{i + 1}$ and $(W_i,f^k)$ is topologically mixing for every $i \in \Z/k\Z$.
\par
Let $\tau \in \End(X,f)$.   
By Lemma~\ref{l:tau-W-in-WW}, there is a unique map
$\alpha \colon \Z/k\Z \to \Z/k\Z$ such that
$\tau(W_i) \subset W_{\alpha(i)}$ for all $i \in Z/k\Z$.
We have  that 
$\tau(W_{i + 1}) = \tau(f(W_i)) = f(\tau(W_i)) \subset f(W_{\alpha(i)}) = W_{\alpha(i) + 1}$ so that
$\alpha(i + 1) = \alpha(i) + 1$ for all $i \in \Z/k\Z$.
By induction, we get $\alpha(i) = i + \alpha(0)$ for all $i \in \Z/k\Z$.
This implies in particular that  $\alpha$ is a permutation of $\Z/k\Z$. 
\par
Let us choose, for each $i \in \Z/k\Z$, an integer $n_i \in \Z$ such that
$i - \alpha(i) = n_i  +  k\Z$.
Note that the map $\tau_i \colon W_i \to W_i$, defined by $\tau_i(x) \coloneqq  f^{n_i} ( \tau(x))$ for all $x \in W_i$,   is an  endomorphism of $(W_i,f^k)$.
\par 
Suppose first that $\tau$ is surjective.
 Then we must have $\tau(W_i) = W_{\alpha(i)}$ for all $i \in \Z/k\Z$.
Suppose that   $x,y \in X$ are homoclinic for $(X,f)$ with $\tau(x) = \tau(y)$.
Let $i$ be the unique element in $\Z/k\Z$ such that $x \in W_i$.
Since $x$ and $y$ are homoclinic,
it follows from Lemma~\ref{l:stably-equiv-same-compo}  that $y \in W_i$.
Consequently, we have   $\tau(x), \tau(y) \in W_{\alpha(i)}$.
Since the dynamical system $(W_i,f^k)$ is a topologically mixing Smale space, it has the Moore property by Lemma~\ref{l:goe-top-mixing-smale}.
As $\tau_i$ is a surjective endomorphism of $(W_i,f^k)$, we deduce that $\tau_i$  is   pre-injective.
Since $x$ and $y$ are homoclinic points of  $(W_i,f^{k})$ (by (i) $\implies$ (ii) in Proposition~\ref{p:homoclinic-f-f-k}) with the same image under $\tau_i$,
it follows that $x = y$.
This shows that $\tau$ is pre-injective and hence that $(X,f)$ has the Moore property.
\par
Suppose now that $\tau$ is a pre-injective endomorphism of $(X,f)$.
Let $i \in \Z/k\Z$.
If $x,y \in W_i$ are homoclinic for $(W_i,f^k)$ then they are homoclinic for $(X,f^k)$ and hence for $(X,f)$ 
(by using  (ii) $\implies$ (i) in Proposition~\ref{p:homoclinic-f-f-k}).
If, in addition, $\tau_i(x) = \tau_i(y)$ then $\tau(x) = \tau(y)$ so that $x = y$ by pre-injectivity of $\tau \in \End(X,f)$.
Consequently, the map $\tau_i  \colon W_i \to W_i$ is a pre-injective endomorphism of $(W_i,f^k)$.
As $(W_i,f^k)$ has the Myhill property by Lemma~\ref{l:goe-top-mixing-smale},
we deduce that $\tau_i(W_i) = W_i$.
This implies that  $\tau(W_i) = W_{\alpha(i)}$.
As $\alpha$ is a permutation of $\Z/k\Z$, we conclude that $\tau(X) = X$.
This shows that $\tau$ is surjective and hence that $(X,f)$ has the Moore property.
\end{proof}

\begin{remark}
\label{r:surj-equiv-preserve-measure}
There is also a purely measure-theoretic characterization of surjectivity for endomorphisms of irreducible Smale spaces.
Indeed, consider an irreducible Smale space $(X,f)$.
By the work of Bowen (see~\cite{bowen-markov-1970}, \cite{bowen-periodic-axiom-a-1971}, \cite{putnam-bowen-measures}), the dynamical system  $(X,f)$ is \emph{intrinsically ergodic} in the sense of~\cite{weiss-intrinsically-ergodic}.
This means that the topological entropy $h(X,f)$ of $(X,f)$ is finite
and there is a unique $f$-invariant Borel probability measure $\mu$ on $X$ whose measure-theoretic entropy $h_\mu(X,f)$ is equal to $h(X,f)$.
Moreover, the support of $\mu$ is the entire space $X$.
It then follows from Corollary~2.3 in~\cite{coven-paul} that an endomorphism $\tau$ of $(X,f)$ is surjective if and only if $\tau$ preserves $\mu$, i.e., if and only if $\mu(\tau^{-1}(B)) = \mu(B)$ for all Borel subsets $B \subset X$.
\end{remark}

\section{The Moore property for non-wandering Smale spaces}
\label{sec:nw-surj-moore}

In this section, we prove the second part of Theorem~\ref{t:main-surjunctive}.

\begin{theorem}
\label{t:moore-for-nw-smale}
Every non-wandering Smale space has the Moore property.
\end{theorem}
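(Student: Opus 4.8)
The plan is to reduce the statement for a non-wandering Smale space $(X,f)$ to the irreducible case, which is already settled by Theorem~\ref{t:goe-for-irred-smale}, via the Smale--Bowen--Ruelle decomposition. By Theorem~\ref{t:smale-decomposition}, the finite family $\WW$ is a clopen partition of $X$ permuted by $f$, and the $f$-orbits of this permutation yield finitely many pairwise disjoint clopen $f$-invariant subsets $\Omega_W = \bigcup_{0 \le i \le k_W - 1} f^i(W)$ that cover $X$, each being an irreducible Smale space (condition (SBR6)). Denote the distinct such sets by $\Omega_1,\dots,\Omega_m$. Since each $\Omega_j$ is clopen and $f$-invariant, two points of a fixed $\Omega_j$ are homoclinic in $(\Omega_j, f\vert_{\Omega_j})$ if and only if they are homoclinic in $(X,f)$; moreover, by Lemma~\ref{l:stably-equiv-same-compo} any two homoclinic points of $(X,f)$ lie in the same element of $\WW$, hence in a single $\Omega_j$. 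These two observations are what let pre-injectivity pass between $X$ and its components.

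Now let $\tau \in \End(X,f)$ be surjective; the goal is to show $\tau$ is pre-injective. First I would verify that $\tau$ \emph{permutes the components}. By Lemma~\ref{l:tau-W-in-WW}, for each $W \in \WW$ there is $W' \in \WW$ with $\tau(W) \subset W'$; since $\tau$ commutes with $f$ we get $\tau(f^i(W)) = f^i(\tau(W)) \subset f^i(W')$, and taking the union over the orbit gives $\tau(\Omega_W) \subset \Omega_{W'}$. This defines a map $\beta \colon \{1,\dots,m\} \to \{1,\dots,m\}$ with $\tau(\Omega_j) \subset \Omega_{\beta(j)}$. Because the $\Omega_j$ are disjoint and cover $X$, surjectivity of $\tau$ forces $\bigcup_j \Omega_{\beta(j)} \supset \tau(X) = X$, so $\beta$ is surjective, hence a bijection.

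Next I would pass to a power. Choose $r \geq 1$ with $\beta^r = \Id$, so that $\tau^r(\Omega_j) \subset \Omega_j$ for every $j$. As $\tau^r$ is again surjective and the $\Omega_j$ are disjoint and cover $X$, each inclusion must be an equality, so $\tau^r(\Omega_j) = \Omega_j$. Thus $\tau^r\vert_{\Omega_j}$ is a surjective endomorphism of the irreducible Smale space $(\Omega_j, f\vert_{\Omega_j})$, which has the Moore property by Theorem~\ref{t:goe-for-irred-smale}; therefore $\tau^r\vert_{\Omega_j}$ is pre-injective. Combining this with the first paragraph shows $\tau^r$ is pre-injective on $(X,f)$: given homoclinic $x,y \in X$ with $\tau^r(x) = \tau^r(y)$, both lie in one $\Omega_j$ and are homoclinic there, whence $x = y$. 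Finally, Corollary~\ref{c:pre-injective-powers} upgrades pre-injectivity of $\tau^r$ to pre-injectivity of $\tau$, and since $\tau$ was an arbitrary surjective endomorphism, $(X,f)$ has the Moore property.

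The real dynamical content is borne entirely by the irreducible case, so the hard part here is the bookkeeping: establishing that a surjective $\tau$ induces a permutation of the finitely many components and that a suitable power restricts to a \emph{surjective self-map} of each component. The one point requiring care is the identification of homoclinicity within a component with homoclinicity in the ambient system, where the clopenness and $f$-invariance of the $\Omega_j$ are exactly what is needed; once these routine verifications are in place, Theorem~\ref{t:goe-for-irred-smale} and Corollary~\ref{c:pre-injective-powers} close the argument.
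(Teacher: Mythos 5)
Your proof is correct and follows essentially the same route as the paper's: reduce to the irreducible case via the Smale--Bowen--Ruelle decomposition, use Lemma~\ref{l:tau-W-in-WW} and surjectivity to get a permutation of the pieces, pass to a power fixing each piece, apply the Moore property from Theorem~\ref{t:goe-for-irred-smale} together with Lemma~\ref{l:stably-equiv-same-compo}, and finish with Corollary~\ref{c:pre-injective-powers}. The only cosmetic difference is that you track the induced permutation on the invariant components $\Omega_j$ directly, while the paper works with the permutation of the finer partition $\WW$ and then passes to the $\Omega_W$; the two bookkeeping schemes are interchangeable.
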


\begin{proof}
Let $(X,f)$ be a non-wandering Smale space and let $\tau \in \End(X,f)$ be a surjective endomorphism of $(X,f)$.
Let us show that $\tau$ is pre-injective. 
We keep the notation as in Theorem~\ref{t:smale-decomposition}.
It follows from Lemma~\ref{l:tau-W-in-WW} that for every $W \in \WW$ there exists a unique $\alpha(W) \in \WW$ such that $\tau(W) \subset \alpha(W)$. 
As $\tau$ is surjective, the map
$\alpha \colon \WW \to \WW$ is  a permutation of $\WW$ and  we have  $\tau(W) = \alpha(W)$ for every $W \in \WW$.
If $k$ denotes the order of the permutation $\alpha$ (i.e., the least integer $k \geq 1$ such that $\alpha^k = \Id_\WW$), we have $\tau^k(W) = \alpha^k(W) =  W$ for every $W \in \WW$.
Consequently, $\tau^k(\Omega_W) = \Omega_W$ for every $W \in \WW$. 
As the dynamical system $(\Omega_W,f)$ is an irreducible Smale space, 
it has the Moore property by Theorem~\ref{t:goe-for-irred-smale}. 
As $\tau^k\vert_{\Omega_W} \in \End(\Omega_W,f)$ is surjective, this implies that $\tau^k|_{\Omega_W}$ is  pre-injective.
Two homoclinic points in $X$ necessarily belong to the same $W \in \WW$
(cf.~Lemma~\ref{l:stably-equiv-same-compo}) 
and therefore to the same  $\Omega_W$.
 We deduce that $\tau^k \in \End(X,f)$ is pre-injective. 
It then follows from Corollary~\ref{c:pre-injective-powers} that $\tau$ itself is pre-injective. 
This shows that $(X,f)$ has the Moore property.
\end{proof}

\begin{remark}
\label{r:smale-space-not-myhill}
There exist non-wandering Smale spaces which do not have the Myhill property.
Consider for example the dynamical system $(X,f)$, where  $X = \{x_0,x_1\}$ is a discrete space with two distinct points and $f \coloneqq \Id_X$.
It is clear that $(X,f)$ is a non-wandering Smale space with two homoclinicity classes, namely, $\{x_0\}$ and $\{x_1\}$.
The map $\tau \colon X \to X$ defined by $\tau(x_0) = \tau(x_1) \coloneqq x_0$ is a pre-injective endomorphism of $(X,f)$ which is not surjective.
 Therefore, $(X,f)$ does not have the Myhill property.
 Observe that the dynamical system $(X,f)$ is topologically conjugate to the $1$-step subshift of finite type $\Sigma \subset \{0,1\}^{\Z}$
 consisting of the two constant configurations. 
\end{remark}

\begin{remark}
\label{r:smale-space-not-moore}
There exist Smale spaces which have neither the Moore property nor the Myhill property.
Indeed, consider  the Weiss subshift $\Sigma \subset \{0,1,2\}^{\Z}$ as in Remark~\ref{r:non-surjunctive-sft}.
We have already observed that $\Sigma$ is not surjunctive, so that, a fortiori, it does not have the Myhill property.
On the other hand, it is easy to see that the map $\tau \colon \Sigma \to \Sigma$, $x \mapsto \tau(x)$,  
which replaces the word $112$ (if it appears in $x$) by $122$,
is a surjective endomorphism of $\Sigma$ which is not pre-injective
(the sequences $u,v \in \Sigma$ defined by
$u_n = v_n \coloneqq 0$ for all $n \leq 0$, $u_1 = v_1 = v_2 \coloneqq 1$, $u_2 = 2$, and $u_n = v_n \coloneqq 2$ for all $n \geq 2$, are homoclinic and satisfy $\tau(u) = \tau(v) = u$).
Therefore $\Sigma$ does not have the Moore property either.  
\end{remark}

\bibliographystyle{siam}

\end{document}